\newtheorem{theorem}{Theorem}
\newtheorem{lemma}{Lemma}
\newtheorem{corollary}{Corollary}
\newtheorem{remark}{Remark}
\DeclareMathOperator*{\cond}{cond}
\renewcommand{\leq}{\leqslant}
\renewcommand{\geq}{\geqslant}
\begin{document}
\begin{center}
\large
\centerline{\Large On fast Fourier solvers for the tensor product}
\smallskip
\centerline{\Large high-order FEM for a generalized Poisson equation}
\vskip 0.3cm

\large{\ Alexander Zlotnik}
\footnote{Corresponding author. Department of Mathematics at Faculty of Economic Sciences,
National Research University Higher School of Economics,
Myasnitskaya 20, 101000 Moscow, Russia.
E-mail: \text{azlotnik2008@gmail.com}}
{ and Ilya Zlotnik}
\footnote{Settlement Depository Company, 2-oi Verkhnii Mikhailovskii proezd 9, building 2, 115419 Moscow, Russia.
E-mail: ilya.zlotnik@gmail.com}
\end{center}
\begin{abstract}
We present direct logarithmically optimal in theory and fast in practice algorithms to implement the tensor product high order finite element method on multi-dimensional rectangular parallelepipeds for solving PDEs of the Poisson kind.
They are based on the well-known Fourier approaches.
The key new points are the fast direct and inverse FFT-based algorithms for expansion in eigenvectors of the 1D eigenvalue problems for the high order FEM.
The algorithms can further be used for numerous applications, in particular, to implement the tensor product high order finite element methods for various time-dependent PDEs.
Results of numerical experiments in 2D and 3D cases are presented.
\end{abstract}
\smallskip\textbf{Keywords.} Fast direct algorithm, high order finite element method, FFT, Poisson equation.
\medskip\par
\textbf{MSC subject classifications.} 65F05, 65F15, 65M60, 65T99.

\section{\large Introduction}

We present direct fast algorithms to implement $n$th order ($n\geq 2$) finite element method (FEM) on rectangular parallelepipeds \cite{C02} for solving a $N$-dimensional generalized Poisson equation ($N\geq 2$) with the Dirichlet boundary condition.
The algorithms are based on the well-known Fourier approaches, e.g., see \cite{BFK11,KS07,SN78,S77} and references therein.
The key new points are the fast direct and inverse algorithms for expansion in eigenvectors of the 1D eigenvalue problems for the high order FEM utilizing several versions of the discrete fast Fourier transform (FFT) \cite{BRY07}.
This solves the old known problem, see \cite[p. 271]{BFK11}, and makes the full algorithms logarithmically optimal with respect to the number of elements as in the case of the bilinear elements ($n=1$) or standard finite-difference schemes.
The algorithms are fast in practice (faster than the theoretical expectations) and demonstrate only a mild growth in $n$ starting from the standard case $n=1$.
For example, in the 9th order case,
the 2D FEM system for $2^{20}$ elements containing almost $85\cdot10^6$ unknowns and
the 3D FEM system for $2^{18}$ elements containing more than $190\cdot10^6$ unknowns are solved respectively in less than 2 and 15 min on an ordinary laptop using Matlab R2016a code, see details below.
\par The algorithms can further serve for a variety of applications including general 2nd order elliptic equations (as preconditioners), for  the $N$-dimensional heat, wave or time-dependent Schr\"{o}dinger PDEs, etc.
They can be applied for some non-rectangular domains, in particular, by involving meshes topologically equivalent to rectangular ones \cite{D96}.
Other standard boundary conditions can be covered as well, see a brief description in \cite{ZZDAN16}.
Moreover, the Fourier structure of algorithms is especially valuable for solving some wave physics problems, in particular, involving non-local boundary conditions, e.g., see \cite{BFK11,DZZ14,ZZ12}, whence our own interest arose.
Clearly the algorithms are also highly parallelizable.
\par The paper is organized as follows.
In Section \ref{sect2}, the statement of the 1D FEM eigenvalue problem together with auxiliary FEM eigenvalue problems on and inside the reference element are given.
The basic Section \ref{sect4} is devoted to a description of eigenvalues and eigenvectors of the 1D FEM eigenvalue problem and the fast direct and inverse algorithms for expansion in these eigenvectors.
Applications to the generalized Poisson equation in a $N$-dimensional rectangular parallelepiped with the Dirichlet boundary condition are described in Section \ref{sect5}.
Results of numerical experiments for $N=2$ and 3 are presented in detail in Section \ref{sect6};
all of them include the standard case $n=1$ for comparison.
\section{\large The statement of 1D FEM eigenvalue problem}
\label{sect2}
\setcounter{equation}{0}
\setcounter{proposition}{0}
\setcounter{theorem}{0}
\setcounter{lemma}{0}
\setcounter{corollary}{0}
\setcounter{remark}{0}
We first consider in detail the FEM for the simplest 1D eigenvalue ODE problem
\begin{gather}
 -u''(x)=\lambda u(x)\ \ \text{on}\ \ [0,X],\ \ u(0)=u(X)=0,\ \ u(x)\not\equiv 0.
\label{eq:diff eig pr}
\end{gather}
\par We take the uniform mesh $\bar{\omega}_h$ with the nodes $x_j=jh$, $j=\overline{0,K}$ (i.e., $0\leqslant j\leqslant K$)
and the step $h=X/K$.
Let $H_h^{(n)}[0,X]$ be the FEM space of the piecewise-polynomial functions $\varphi\in C[0,X]$ such that
$\varphi(x)\in \mathcal{P}_n$ for $x\in [x_{j-1}, x_j]$, $j=\overline{1,K}$, with $\varphi(0)=\varphi(X)=0$;
here $\mathcal{P}_n$ is the space of polynomials having at most $n$th degree, $n\geq 2$.
\par Let $S_K^{(n)}$ be the space of vector functions $w$ such that
$w_j\in \mathbb{R}$ for $j=\overline{0,K}$ with $w_0=w_K=0$ and $w_{j-1/2}\in\mathbb{R}^{n-1}$, $j=\overline{1,K}$.
Clearly $\dim S_K^{(n)}=nK-1$.
A function $\varphi\in H_h^{(n)}[0,X]$ is uniquely defined by its values at the mesh nodes $\varphi_j=\varphi(x_j)$, $j=\overline{0,K}$, with $\varphi_0=\varphi_K=0$, and inside the elements $\varphi_{j-1/2}=\{\varphi(x_{j-1}+(l/n)h)\}_{l=1}^{n-1}$, $j=\overline{1,K}$, that form the element in $S_K^{(n)}$.
\par We use the following scaled operator form of the standard FEM discretization for problem \eqref{eq:diff eig pr}
\begin{gather}
 \mathcal{A}v=\lambda\mathcal{C}v,\ \ v\in S_K^{(n)},\ \ v\neq 0.
\label{eq:eig_glob}
\end{gather}
Here $\mathcal{A}=\mathcal{A}^T>0$ and $\mathcal{C}=\mathcal{C}^T>0$ are the global (scaled) stiffness and mass operators (matrices) acting in $S_K^{(n)}$ and together with $\lambda$ \textit{independent on} $h$; the true approximate eigenvalues are $\lambda_h=4h^{-2}\lambda$.
\par Let $A=\{A_{kl}\}_{k,l=0}^n$ and
$C=\{C_{kl}\}_{k,l=0}^n$ be the local stiffness and mass matrices related to the reference element $\sigma_0=[-1,1]$
with the following entries
\[
 A_{kl} = \int\nolimits_{\sigma_0}e'_k(x)e'_l(x)\,dx,\ \ C_{kl} = \int\nolimits_{\sigma_0}e_k(x)e_l(x)\,dx,
\]
where $\{e_l\}_{l=0}^n$ is the Lagrange basis in $\mathcal{P}_n$ such that
$e_l\big(-1+(2k)/n\big)=\delta_{kl}$, for $k,l=\overline{0,n}$,
and $\delta_{kl}$ is the Kronecker delta.
The matrices $A$, $C$ and the related matrix pencil $G(\lambda):=A-\lambda C$ have the following $3\times3$--block form
\begin{equation}
 A=
\left(\hspace{-4pt}
\begin{array}{ccc}
 a_0& a^T           & a_n\\
 a  & \widetilde{A} &\check{a}\\
 a_n& \check{a}^T   &a_0
\end{array}
\hspace{-4pt}\right),\
 C=
\left(\hspace{-4pt}
\begin{array}{ccc}
 c_0& c^T           & c_n\\
 c  & \widetilde{C} &\check{c}\\
 c_n& \check{c}^T   &c_0
\end{array}
\hspace{-4pt}\right),\
 G(\lambda)=\left(\hspace{-4pt}
\begin{array}{ccc}
 g_0(\lambda)& g^T(\lambda)          & g_n(\lambda)\\
 g(\lambda)  & \widetilde{G}(\lambda)&\check{g}(\lambda)\\
 g_n(\lambda)& \check{g}^T(\lambda)  &g_0(\lambda)
\end{array}
\hspace{-4pt}\right).
\label{eq:matrAC}
\end{equation}
Here $\widetilde{A}$, $\widetilde{C}$ and $\widetilde{G}(\lambda)=\widetilde{A}-\lambda \widetilde{C}$ are square matrices of order $n-1$ with the column vectors $a,c,g(\lambda)=a-\lambda c\in\mathbb{R}^{n-1}$
whereas $\check{p}_l\equiv (Pp)_l=p_{n-l}$, $l=\overline{1,n-1}$, for $p\in\mathbb{R}^{n-1}$.
The matrices $A$, $C$ and $G(\lambda)$ are \textit{bisymmetric} (i.e. symmetric with respect to the main and secondary diagonals).
\par Notice that $P_{ij}=\delta_{i(n-j)}$ and $P^T=P^{-1}=P$.
Let $\mathbb{R}_e^{n-1}$ and $\mathbb{R}_o^{n-1}$ be the subspaces of even and odd vectors in $\mathbb{R}^{n-1}$, i.e. such that respectively $Pp=p$ and $Pp=-p$.
The decomposition $\mathbb{R}^{n-1}=\mathbb{R}_e^{n-1}\oplus\mathbb{R}_o^{n-1}$ (for $n\geq 3$) is implemented by the formulas
\begin{gather}
p=p_e+p_o,\ \ p_e:=0.5(p+\check{p}),\ \ p_o:=0.5(p-\check{p}).
\label{eq:eodecomp}
\end{gather}
Notice that $\dim\mathbb{R}_e^{n-1}=[n/2]$ and $\dim\mathbb{R}_o^{n-1}=[(n-1)/2]$, with $\mathbb{R}_o^{n-1}=\{0\}$ for $n=2$.
Clearly
\begin{gather}
 \check{p}\cdot q=p\cdot\check{q},\ \ \check{p}\cdot\check{q}=p\cdot q\ \ \text{for any}\ \ p,q\in\mathbb{R}^{n-1}.
\label{eq:eodprop1}
\end{gather}
Hereafter the symbol $\cdot$ denotes the inner product of vectors in $\mathbb{R}^{n-1}$.

\par Then problem \eqref{eq:eig_glob} can be written in the following explicit form
\begin{gather}
 g_n(\lambda) v_{j-1}+\check{g}(\lambda)\cdot v_{j-1/2}+2g_0(\lambda) v_j+g(\lambda)\cdot v_{j+1/2}+g_n(\lambda) v_{j+1}=0,\ \
 j=\overline{1,K-1},
\label{eq:eigpr1}
\\[1mm]
 g(\lambda)v_{j-1}+\widetilde{G}(\lambda)v_{j-1/2}+\check{g}(\lambda)v_j=0,\ \ j=\overline{1,K},
\label{eq:eigpr2}
\end{gather}
with $v_0=v_K=0$, $v\not\equiv 0$.

\par We also consider the auxiliary eigenvalue problems on and inside the reference element $\sigma_0$
\begin{gather}
 Ae=\lambda Ce,\ \ e\in\mathbb{R}^{n+1},\ \ e\neq 0;
\label{eq:eig_elem}\\[1mm]
 \widetilde{A}e=\lambda\widetilde{C}e,\ \ e\in\mathbb{R}^{n-1},\ \ e\neq 0,
\label{eq:eig_elemt}
\end{gather}
where clearly $A\geq 0$, $C>0$ and $\widetilde{A}=\widetilde{A}^T>0$, $\widetilde{C}=\widetilde{C}^T>0$; see some their properties in  \cite{ZZ12} (where the problem similar to \eqref{eq:eigpr1}, \eqref{eq:eigpr2} on the uniform mesh on $[0,\infty)$ for $\lambda\in \mathbb{C}$ was studied).
Denote by $S_n$ and $\tilde{S}_n$ their spectra.
Let $\{\lambda_0^{(l)},e^{(l)}\}_{l=1}^{n-1}$ be eigenpairs of problem \eqref{eq:eig_elemt}.
\begin{lemma}
\label{prop1}
1. The subspaces $\mathbb{R}_e^{n-1}$ and $\mathbb{R}_e^{n-1}$ are invariant with respect to $\widetilde{A}$ and $\widetilde{C}$.
Thus each eigenvector in $\{e^{(l)}\}_{l=1}^{n-1}$ can be chosen either even or odd.
Also $\lambda_0^{(l)}>0$, $l=\overline{1,n-1}$.
\par 2. Similar properties are valid for problem \eqref{eq:eig_elem} with the exception of one simple zero eigenvalue.
\end{lemma}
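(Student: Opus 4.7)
\medskip
\noindent\textbf{Proof proposal.} The key structural fact I would exploit is that both $\widetilde{A}$ and $\widetilde{C}$ are bisymmetric, which follows directly from the bisymmetry of $A$ and $C$ stated after \eqref{eq:matrAC}: writing out $(P\widetilde{A}P)_{kl}=\widetilde{A}_{(n-k)(n-l)}=A_{(n+1-k)(n+1-l)}$ and invoking the persymmetry $A_{ij}=A_{(n+2-i)(n+2-j)}$ gives $P\widetilde{A}P=\widetilde{A}$, i.e. $P\widetilde{A}=\widetilde{A}P$, and the same for $\widetilde{C}$. Hence for any $p\in\mathbb{R}_e^{n-1}$ we have $P(\widetilde{A}p)=\widetilde{A}Pp=\widetilde{A}p$, so $\widetilde{A}p\in\mathbb{R}_e^{n-1}$, and likewise on $\mathbb{R}_o^{n-1}$ and for $\widetilde{C}$. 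This is the invariance claim of part~1.

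From this and the fact that $\widetilde{C}=\widetilde{C}^T>0$, the generalized eigenvalue problem \eqref{eq:eig_elemt} has a $\widetilde{C}$-orthogonal basis of eigenvectors. Because $P$ is $\widetilde{C}$-selfadjoint (an immediate consequence of $P\widetilde{C}=\widetilde{C}P$ together with $P^T=P$, and also visible from \eqref{eq:eodprop1}) and commutes with $\widetilde{A}$, each generalized eigenspace is $P$-invariant; since $P^2=I$ with eigenvalues $\pm1$, any such eigenspace further decomposes via \eqref{eq:eodecomp} into its even and odd parts, so an eigenbasis of even/odd vectors exists. The positivity $\lambda_0^{(l)}>0$ then follows from the Rayleigh quotient $\lambda_0^{(l)}=e^{(l)\,T}\widetilde{A}e^{(l)}/e^{(l)\,T}\widetilde{C}e^{(l)}$ together with $\widetilde{A}>0$, which itself is obtained by extending $e\in\mathbb{R}^{n-1}$ with zero endpoint values to $\bar e\in\mathbb{R}^{n+1}$ and writing $e^T\widetilde{A}e=\bar e^T A\bar e=\int_{\sigma_0}(\bar e(x))'{}^2\,dx$; a polynomial of degree $\leq n$ vanishing at $\pm1$ and with non-zero interior nodal values cannot be constant, so the integral is strictly positive.

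For part~2, the bisymmetry of $A$ and $C$ themselves gives the same invariance and even/odd splitting argument verbatim on $\mathbb{R}^{n+1}$. Positivity now has to be weakened to $\lambda\geq0$ because $A\geq0$ only. To isolate the exceptional eigenvalue, I would characterize the null space of $A$: $e^TAe=\int_{\sigma_0}(e(x))'{}^2\,dx=0$ forces $e(x)=\sum_{l=0}^{n}e_le_l(x)$ to be constant on $[-1,1]$, hence $e_0=\cdots=e_n$, giving the one-dimensional kernel spanned by $(1,\ldots,1)^T$, which is manifestly even. Thus $\lambda=0$ is simple, and every other eigenvector is $C$-orthogonal to this constant, lies outside $\ker A$, and therefore satisfies $\lambda=e^TAe/e^TCe>0$.

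I do not expect a serious obstacle: the only point to verify carefully is the inheritance of bisymmetry by the middle block $\widetilde{A}$ of $A$ (and similarly for $\widetilde{C}$), and the fact that a degree-$n$ polynomial in $\mathcal{P}_n$ is uniquely determined by its $n+1$ equispaced nodal values, which makes the identification $e^TAe=\int_{\sigma_0}(e'(x))^2\,dx$ and the characterization of $\ker A$ legitimate.
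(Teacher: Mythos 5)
Your proof is correct and follows essentially the same route as the paper: bisymmetry of $\widetilde{A}$, $\widetilde{C}$ (inherited from $A$, $C$) gives commutation with $P$, hence the invariance of the even/odd subspaces and the possibility of choosing even or odd eigenvectors. The only difference is that you supply in full the details the paper dispatches by declaring $\lambda_0^{(l)}>0$ ``well-known'' and by citing \cite{ZZ12} for the simplicity of the zero eigenvalue of \eqref{eq:eig_elem} — your Rayleigh-quotient argument via $e^T\widetilde{A}e=\int_{\sigma_0}(\varphi'(x))^2\,dx$ and the characterization of $\ker A$ as the constants are both correct and make the statement self-contained.
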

\begin{proof}
Any bisymmetric matrix $B$ of the order $n-1$ commutes with $P$, i.e.
\begin{gather}
 BP=PB,
\label{eq:bisym}
\end{gather}
that implies the main result of Item 1.
The property $\lambda_0^{(l)}>0$, $l=\overline{1,n-1}$
is well-known.
\par For Item 2, the argument is similar taking into account that $A(1\ldots 1)^T=0$ (concerning simplicity of $\lambda=0$, see Proposition 5 in \cite{ZZ12}).
\end{proof}

One can check by the direct computation that all the eigenvalues in $S_n$ and $\tilde{S}_n$ are simple and $S_n\cap\tilde{S}_n=\emptyset$ at least for $1\leqslant n\leqslant 9$, see \cite{ZZ12}.

For low $n$, one can find $S_n$ and $\tilde{S}_n$ analytically (with the help of Mathematica), in particular,
$\tilde{S}_2=\{2.5\}$,
$\tilde{S}_3=\{2.5,10.5\}$,
$\tilde{S}_4=\{14\pm\sqrt{133},10.5\}$ and
$\tilde{S}_5=\{14\pm\sqrt{133},30\pm9\sqrt{5}\}$ (repeatability of the eigenvalues is not occasional, see \cite{ZZ12}).

\par We choose $\{e^{(l)}\}_{l=1}^{n-1}$ as in Lemma \ref{prop1} using scaling $\widetilde{C}e^{(l)}\cdot e^{(l)}=1$.
\begin{lemma}
\label{lem1}
Let $\widetilde{G}(\lambda)p=-g(\lambda)$, where  $\lambda\not\in\tilde{S}_n$.
Let the vectors $a$ and $c$ be expanded as
\begin{gather}
 a=\sum_{l=1}^{n-1}a^{(l)}\widetilde{C}e^{(l)},\ \ c=\sum_{l=1}^{n-1}c^{(l)}\widetilde{C}e^{(l)},\ \ \text{with}\ \
 a^{(l)}=a\cdot e^{(l)},\ \ c^{(l)}=c\cdot e^{(l)}.
\label{eq:formpac}
\end{gather}
See $\widetilde{G}(\lambda)$, $g(\lambda)$, $a$ and $c$ in \eqref{eq:matrAC}.
Then the following formulas hold
\begin{gather*}
 p=\sum_{l=1}^{n-1}\frac{a^{(l)}-\lambda c^{(l)}}{\lambda-\lambda_0^{(l)}}e^{(l)}
  =\sum_{l=1}^{n-1}\frac{a^{(l)}-\lambda_0^{(l)} c^{(l)}}{\lambda-\lambda_0^{(l)}}e^{(l)}-\widetilde{C}^{-1}c.
\label{eq:formp}
\end{gather*}
\end{lemma}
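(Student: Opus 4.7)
The plan is to expand $p$ in the basis $\{e^{(l)}\}_{l=1}^{n-1}$, apply $\widetilde{G}(\lambda)$, and match coefficients against the expansion of $-g(\lambda)$. The two given expansions of $a$ and $c$ in \eqref{eq:formpac} are already written against the dual basis $\{\widetilde{C}e^{(l)}\}$, which is exactly what the operator $\widetilde{G}(\lambda)$ produces when applied to an eigenvector, so everything lines up nicely.

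First I would observe that, by Lemma \ref{prop1} and the chosen normalization $\widetilde{C}e^{(l)}\cdot e^{(l)}=1$, the system $\{e^{(l)}\}_{l=1}^{n-1}$ is $\widetilde{C}$-orthonormal, i.e.\ $\widetilde{C}e^{(l)}\cdot e^{(m)}=\delta_{lm}$, and hence a basis of $\mathbb{R}^{n-1}$. Therefore the scalar coefficients in \eqref{eq:formpac} are indeed $a^{(l)}=a\cdot e^{(l)}$ and $c^{(l)}=c\cdot e^{(l)}$, and we may write $p=\sum_{l=1}^{n-1}p^{(l)}e^{(l)}$ for some coefficients $p^{(l)}$ to be determined.

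Next I would apply $\widetilde{G}(\lambda)=\widetilde{A}-\lambda\widetilde{C}$ term by term, using $\widetilde{A}e^{(l)}=\lambda_0^{(l)}\widetilde{C}e^{(l)}$, to get
\[
 \widetilde{G}(\lambda)p=\sum_{l=1}^{n-1}p^{(l)}\bigl(\lambda_0^{(l)}-\lambda\bigr)\widetilde{C}e^{(l)}.
\]
On the other hand $-g(\lambda)=-(a-\lambda c)=-\sum_{l=1}^{n-1}\bigl(a^{(l)}-\lambda c^{(l)}\bigr)\widetilde{C}e^{(l)}$ by \eqref{eq:formpac}. Since $\{\widetilde{C}e^{(l)}\}$ is a basis (equivalently, taking the inner product with $e^{(m)}$ and using $\widetilde{C}$-orthonormality), equating coefficients gives $p^{(l)}\bigl(\lambda_0^{(l)}-\lambda\bigr)=-\bigl(a^{(l)}-\lambda c^{(l)}\bigr)$. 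Since $\lambda\notin\tilde S_n$, we have $\lambda\neq\lambda_0^{(l)}$ for all $l$, so we may divide to obtain $p^{(l)}=\bigl(a^{(l)}-\lambda c^{(l)}\bigr)/\bigl(\lambda-\lambda_0^{(l)}\bigr)$, which is the first claimed equality.

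For the second equality I would use the elementary algebraic identity
\[
 \frac{a^{(l)}-\lambda c^{(l)}}{\lambda-\lambda_0^{(l)}}=\frac{a^{(l)}-\lambda_0^{(l)}c^{(l)}}{\lambda-\lambda_0^{(l)}}-c^{(l)},
\]
then sum over $l$ and recognize that $\sum_{l=1}^{n-1}c^{(l)}e^{(l)}=\widetilde{C}^{-1}c$, because applying $\widetilde{C}$ to the left-hand side reproduces the expansion of $c$ in \eqref{eq:formpac}. There is essentially no obstacle here; the only thing to be careful with is to verify that the second expansion in \eqref{eq:formpac} really characterizes $\widetilde{C}^{-1}c$, which is immediate from the $\widetilde{C}$-orthonormality of $\{e^{(l)}\}$.
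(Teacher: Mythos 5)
Your proposal is correct and follows essentially the same route as the paper: expand $p$ in the eigenbasis, apply $\widetilde{G}(\lambda)=\widetilde{A}-\lambda\widetilde{C}$ using $\widetilde{A}e^{(l)}=\lambda_0^{(l)}\widetilde{C}e^{(l)}$, match coefficients of $\widetilde{C}e^{(l)}$ with those of $-g(\lambda)=-(a-\lambda c)$, and divide by $\lambda-\lambda_0^{(l)}$ (legitimate since $\lambda\notin\tilde{S}_n$). Your treatment of the second equality, via the identity $\frac{a^{(l)}-\lambda c^{(l)}}{\lambda-\lambda_0^{(l)}}=\frac{a^{(l)}-\lambda_0^{(l)}c^{(l)}}{\lambda-\lambda_0^{(l)}}-c^{(l)}$ and $\sum_l c^{(l)}e^{(l)}=\widetilde{C}^{-1}c$, is exactly the step the paper leaves implicit in ``the result easily follows.''
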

\begin{proof}
For the expansions $p=\sum_{l=1}^{n-1}p_le^{(l)}$ and \eqref{eq:formpac}, we have
\[
 \widetilde{G}(\lambda)p=\sum_{l=1}^{n-1}\big(\lambda_0^{(l)}-\lambda\big)p_l\widetilde{C}e^{(l)},\ \
 g(\lambda)=\sum_{l=1}^{n-1}\big(a^{(l)}-\lambda c^{(l)}\big)\widetilde{C}e^{(l)},
\]
and the result easily follows.
\end{proof}
\section{\large Solving of the 1D FEM eigenvalue problem and related FFT-based algorithms}
\label{sect4}
\setcounter{equation}{0}
\setcounter{proposition}{0}
\setcounter{theorem}{0}
\setcounter{lemma}{0}
\setcounter{corollary}{0}
\setcounter{remark}{0}
Below we impose the following assumption:
\[
 (A)\ \ \text{the eigenvalues in}\ S_n\ \text{and}\ \tilde{S}_n\ \text{are simple and}\ S_n\cap\tilde{S}_n=\emptyset.
\]
Recall that it is valid at least for $2\leq n\leq 9$ (below in Section \ref{sect6} we verify it up to $n=21$).
\par We introduce the auxiliary equation
\begin{gather}
 \widehat{\gamma}(\lambda)\equiv -(g_0-g\cdot\widetilde{G}^{-1}g)(\lambda)/(g_n-\check{g}\cdot\widetilde{G}^{-1}g)(\lambda)=\theta,
\label{eq:theta0}
\end{gather}
where  $\lambda\not\in\tilde{S}_n$, with the parameter $\theta$.
Its solving is equivalent to finding the roots of a polynomial having at most $n$th degree, see \cite{ZZ12}.
In particular, owing to Lemma \ref{lem1} this equation can be rewritten as
\begin{gather}
 a_0-\lambda c_0+\sum_{l=1}^{n-1}\frac{(a^{(l)}-\lambda c^{(l)})^2}{\lambda-\lambda_0^{(l)}}
 =-\theta\Big(a_n-\lambda c_n+\sum_{l=1}^{n-1}\frac{(\check{a}^{(l)}-\lambda \check{c}^{(l)})(a^{(l)}-\lambda c^{(l)})}{\lambda-\lambda_0^{(l)}}\Big).
\label{eq:theta}
\end{gather}
Here
$\check{a}^{(l)}=\check{a}\cdot e^{(l)}$ and $\check{c}^{(l)}=\check{c}\cdot e^{(l)}$.
Moreover, for $2\leqslant n\leqslant 9$ computations help to confirm that the vectors $e^{(l)}$ are even and odd respectively for odd and even $l$ provided that $\lambda_0^{(1)}<\ldots<\lambda_0^{(n-1)}$; therefore $\check{a}^{(l)}=(-1)^la^{(l)}$ and $\check{c}^{(l)}=(-1)^lc^{(l)}$, $l=\overline{1,n-1}$.
\par We define the simplest inner product in $S_K^{(n)}$ and the corresponding squared $\mathcal{C}$-norm
\[
 (y,v)_{S_K^{(n)}}:=\sum_{j=1}^{K-1}y_jv_j+\sum_{j=1}^{K}y_{j-1/2}\cdot v_{j-1/2},\ \
 \|v\|_{\mathcal{C}}^2:=(\mathcal{C}v,v)_{S_K^{(n)}}.
\]
\par Next theorem describes eigenvalues and eigenvectors of problem \eqref{eq:eig_glob}.
\begin{theorem}
\label{th:eigpares}
1. The spectrum of problem \eqref{eq:eig_glob} consists in $\tilde{S}_n$
and the numbers $\big\{\lambda_k^{(l)}\big\}_{l=1}^n$ that are all $n$ (and all positive real) solutions to equation \eqref{eq:theta} with $\theta=\theta_k:=\cos\frac{\pi k}{K}$ for $k=\overline{1,K-1}$.
The numbers $\big\{\lambda_k^{(l)}\big\}_{l=1}^n$ differ from $\{\lambda_0^{(l)}\}_{l=1}^{n-1}$ and are different for fixed $k$ .
\par 2. The following eigenvector corresponds to the eigenvalue $\lambda_0^{(l)}$:
\begin{gather}
 s_{0,j}^{(l)}=0,\,\ j=\overline{1,K-1},\ \
 s_{0,j-1/2}^{(l)}=(-P)^{j-1}e^{(l)},\,\ j=\overline{1,K},
\label{eq:eig vec1}
\end{gather}
for $l=\overline{1,n-1}$. Here $(-P)^{j-1}e=(-1)^{j-1}e$ for even $e$, $(-P)^{j-1}e=e$ for odd $e$.
\par 3. The following eigenvector corresponds to the eigenvalue $\lambda_k^{(l)}$:
\begin{gather}
s_{k,j}^{(l)}=s_{k,j},\,\ j=\overline{1,K-1},\ \
 s_{k,j-1/2}^{(l)}=p_k^{(l)}\sin\frac{\pi k(j-1)}{K}+\check{p}_k^{(l)}\sin\frac{\pi kj}{K},\,\ j=\overline{1,K},
\label{eq:eig vec2}
\end{gather}
where $s_{k,j}:=\sin\frac{\pi kj}{K}$ and $p_k^{(l)}\in\mathbb{R}^{n-1}$ solves the non-degenerate algebraic system
$\widetilde{G}\big(\lambda^{(l)}_k\big)p_k^{(l)}=-g\big(\lambda^{(l)}_k\big)$, for $k=\overline{1,K-1}$, $l=\overline{1,n}$.

\par 4. The introduced eigenvectors are $\mathcal{C}$-orthogonal, i.e.
\begin{gather}
 \big(\mathcal{C}s_{k}^{(l)},s_{\tilde{k}}^{(\tilde{l})}\big)_{S_K^{(n)}}=0
\label{eq:ort}
\end{gather}
 for any $k,\tilde{k}\in\overline{0,K-1}$, $l\in\overline{1,n-\delta_{k0}}$ and $\tilde{l}\in\overline{1,n-\delta_{\tilde{k}0}}$ such that $k\neq\tilde{k}$ and/or $l\neq\tilde{l}$.
\par Consequently they form the basis in $S_K^{(n)}$, i.e. any $w\in S_K^{(n)}$ can be uniquely expanded as
\begin{gather}
 w=\sum_{l=1}^{n-1}w_{0l}s_0^{(l)}+\sum_{k=1}^{K-1}\sum_{l=1}^n w_{kl}s_k^{(l)}.
\label{eq:decomp}
\end{gather}
\end{theorem}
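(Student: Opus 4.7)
The plan is to verify parts 2 and 3 by direct substitution of the proposed eigenvectors into the coupled recurrences \eqref{eq:eigpr1}--\eqref{eq:eigpr2}, then derive part 1 by a degree--dimension count that leans on the $\mathcal{C}$-orthogonality of part 4, and handle part 4 itself by combining the standard symmetric-pencil identity with classical sine orthogonality. For part 2 I would substitute $v_j = 0$ and $v_{j-1/2}^{(l)} = (-P)^{j-1}e^{(l)}$ with $\lambda = \lambda_0^{(l)}$. The mass recurrence collapses to $\widetilde{G}(\lambda_0^{(l)})[(-P)^{j-1}e^{(l)}] = 0$, which holds because $P$ commutes with $\widetilde{G}$ by bisymmetry \eqref{eq:bisym} and $e^{(l)}$ solves \eqref{eq:eig_elemt}; the nodal recurrence reduces to $\check{g}\cdot v_{j-1/2} + g\cdot v_{j+1/2} = 0$, which vanishes via $v_{j+1/2} = -Pv_{j-1/2}$ and the identity $\check{g}\cdot w = g\cdot Pw$ from \eqref{eq:eodprop1}. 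For part 3 I would substitute \eqref{eq:eig vec2} into \eqref{eq:eigpr2} and, using linear independence in $j$ of $\sin\frac{\pi k(j-1)}{K}$ and $\sin\frac{\pi kj}{K}$, read off $\widetilde{G}(\lambda)p_k^{(l)} = -g(\lambda)$ (the companion $\widetilde{G}(\lambda)\check{p}_k^{(l)} = -\check{g}(\lambda)$ being automatic from bisymmetry). Substituting into \eqref{eq:eigpr1} and using the sum-to-product identity $\sin\frac{\pi k(j-1)}{K} + \sin\frac{\pi k(j+1)}{K} = 2\theta_k\sin\frac{\pi kj}{K}$ together with \eqref{eq:eodprop1} and $p_k^{(l)} = -\widetilde{G}(\lambda)^{-1}g(\lambda)$ from Lemma \ref{lem1}, the expression collapses to $2\bigl[(g_0 - g\cdot\widetilde{G}^{-1}g) + \theta_k(g_n - \check{g}\cdot\widetilde{G}^{-1}g)\bigr]\sin\frac{\pi kj}{K}$, which vanishes iff $\widehat{\gamma}(\lambda) = \theta_k$, i.e.\ iff $\lambda$ solves \eqref{eq:theta}.

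Part 1 then follows by counting: clearing denominators in \eqref{eq:theta} yields a polynomial of degree at most $n$, so each $\theta_k$ produces at most $n$ eigenvalues, bounding the total by $(n-1) + n(K-1) = nK-1 = \dim S_K^{(n)}$. The $\mathcal{C}$-orthogonality of part 4 then forces the bound to be sharp --- each $\theta_k$ must yield exactly $n$ distinct positive real roots (reality and positivity inherited from $\mathcal{A}, \mathcal{C} > 0$), all disjoint from $\tilde{S}_n$ (otherwise $p_k^{(l)}$ in Lemma \ref{lem1} would be undefined, a possibility ruled out by Assumption (A)). For part 4, the symmetric-pencil identity $(\lambda_k^{(l)} - \lambda_{\tilde{k}}^{(\tilde{l})})\bigl(\mathcal{C}s_k^{(l)}, s_{\tilde{k}}^{(\tilde{l})}\bigr)_{S_K^{(n)}} = 0$ immediately delivers orthogonality for distinct eigenvalues. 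Pairs with possibly coincident eigenvalues across $k \neq \tilde{k}$ I would handle by direct computation, splitting the inner product into nodal and element-interior contributions and invoking the classical identity $\sum_{j=1}^{K-1}\sin\frac{\pi kj}{K}\sin\frac{\pi\tilde{k}j}{K} = \frac{K}{2}\delta_{k\tilde{k}}$ together with its shifted variants for the interior terms (when $k = 0$ only the interior contribution is nontrivial, again handled analogously).

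The algebraic parts 2 and 3 are essentially mechanical once the formulas are in hand; the main obstacle is part 1's sharp count. Proving that each $\theta_k$ produces exactly $n$ distinct positive roots disjoint from $\tilde{S}_n$ cannot be done by degree bounds alone --- I would close the argument either by a monotonicity analysis of $\widehat{\gamma}$ along the lines of \cite{ZZ12}, or, as outlined above, by the dimension count driven by part 4. The direct orthogonality computation for pairs with $k \neq \tilde{k}$, though elementary in spirit, is therefore where the real technical work of the proof sits.
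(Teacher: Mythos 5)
Your verifications of Items 2 and 3 by direct substitution into \eqref{eq:eigpr1}--\eqref{eq:eigpr2} are correct, and your plan for Item 4 (the symmetric-pencil identity for distinct eigenvalues plus an explicit shifted-sine computation for $k\neq\tilde{k}$) is in substance what the paper does via Lemma \ref{lem:scalar_prod1} and Corollary \ref{cor:lem:scalar_prod1}. The genuine gap is Item 1. Substitution only proves that the listed numbers \emph{are} eigenvalues; the theorem asserts in addition that the spectrum contains nothing else and that each equation $\widehat{\gamma}(\lambda)=\theta_k$ has exactly $n$ distinct positive roots, all outside $\tilde{S}_n$. Your proposed closure, that the $\mathcal{C}$-orthogonality of Item 4 ``forces the bound to be sharp,'' does not work: orthogonality of the constructed vectors gives their linear independence and hence only the upper bound (number of constructed eigenvectors $\leq nK-1$), which you already have from the degree bound. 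It cannot exclude the a priori possibilities that some equation has fewer than $n$ real roots, that there is an eigenvalue $\lambda\notin\tilde{S}_n$ with $\hat{g}_n(\lambda)=0$ (which would solve neither \eqref{eq:theta0} nor \eqref{eq:theta}), or that some $\lambda_0^{(l)}$ has a higher-dimensional eigenspace; in each of these scenarios your count simply fails to be sharp and nothing contradicts Item 4. Likewise, disjointness of the roots from $\tilde{S}_n$ does not follow from assumption (A) ``because $p_k^{(l)}$ would be undefined'': in the cleared polynomial form \eqref{eq:theta} the points $\lambda_0^{(l)}$ are not automatically excluded and require an argument.

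The paper closes exactly this gap by arguing in the reverse direction: take an \emph{arbitrary} eigenpair $(\lambda,v)$ of \eqref{eq:eig_glob}. If $\lambda\in\tilde{S}_n$, assumption (A) (simplicity, $\lambda\notin S_n$, parity of $e^{(l)}$) forces $v_j\equiv 0$ and then a one-dimensional eigenspace of the form \eqref{eq:eig vec1}. If $\lambda\notin\tilde{S}_n$, one eliminates $v_{j-1/2}$ via \eqref{eq:s3t1f3}; the degenerate cases are ruled out by (A) applied to the element problem \eqref{eq:eig_elem} ($\hat{g}_0(\lambda)=\hat{g}_n(\lambda)=0$ would make $\lambda$ a double eigenvalue of \eqref{eq:eig_elem}, while $\hat{g}_n(\lambda)=0\neq\hat{g}_0(\lambda)$ forces $v=0$), so every such eigenvalue satisfies $\widehat{\gamma}(\lambda)=\theta_k$ with a one-dimensional eigenspace of the form \eqref{eq:eig vec2}. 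Only after this completeness step does the dimension count $\dim S_K^{(n)}-(n-1)=n(K-1)$ force each of the $K-1$ equations to have exactly $n$ distinct roots. Your alternative route (a monotonicity analysis of $\widehat{\gamma}$ between its poles, as in \cite{ZZ12}) could also deliver the sharp count and would then make your verification-plus-orthogonality scheme complete, but it is not carried out in the proposal; as written, the inclusion ``spectrum $\subseteq$ listed values'' and the exact root count are missing, and this is precisely the nontrivial part of the theorem.
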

\begin{proof}
\par 1. We distinguish between two cases.
Let first $\lambda\in \tilde{S}_n$ and $e$ satisfy \eqref{eq:eig_elemt}.
Then, for any $j=\overline{1,K}$, using equation \eqref{eq:eigpr2} we get
\[
 0=v_{j-1/2}\cdot\widetilde{G}(\lambda)e=\widetilde{G}(\lambda)v_{j-1/2}\cdot e
 =-\big[(g(\lambda)\cdot e)v_{j-1}+(\check{g}(\lambda)\cdot e)v_j\big].
\]
Clearly
\begin{gather*}
 G(\lambda)\left(
\begin{array}{c}
 0\\[1mm]
 e\\[1mm]
 0
\end{array}
\right)=
\left(
\begin{array}{c}
 g(\lambda)\cdot e\\[1mm]
 0\\[1mm]
 \check{g}(\lambda)\cdot e
\end{array}
\right).
\end{gather*}
Since $\lambda\not\in S_n$ by assumption (A), we have $g(\lambda)\cdot e\neq 0$ or $\check{g}(\lambda)\cdot e\neq 0$.
Owing to assumption (A) and Lemma \ref{prop1}, $\lambda$ is simple in $\tilde{S}_n$ and $e$ is either even or odd.
Correspondingly either
$\check{g}(\lambda)\cdot e=g(\lambda)\cdot e\neq 0$ and
$v_j=-v_{j-1}$, or
$\check{g}(\lambda)\cdot e=-g(\lambda)\cdot e\neq 0$ and
$v_j=v_{j-1}$.
Since $v_0=0$, in both cases we get
\begin{gather}
 v_j=0,\ \ j=\overline{0,K}.
\label{eq:s3t1f1}
\end{gather}
Thus equation \eqref{eq:eigpr2} is reduced to $\widetilde{G}v_{j-1/2}=0$ and implies that $v_{j-1/2}=c_{j-1/2}e$.
\par Now equation \eqref{eq:eigpr1} is reduced to
\[
 c_{j-1/2}\check{g}(\lambda)\cdot e+c_{j+1/2}g(\lambda)\cdot e=0,\ \ j=\overline{1,K-1}.
\]
Therefore $c_{j+1/2}=-c_{j-1/2}$ for even $e$ or $c_{j+1/2}=c_{j-1/2}$ for odd $e$.
Consequently the sought eigenvector $v$ satisfies \eqref{eq:s3t1f1} together with
\[
 v_{j-1/2}=(-1)^{j-1}e\ \ \text{for even}\ \ e,\ \  v_{j-1/2}=e\ \ \text{for odd}\ \ e,\ \ j=\overline{1,K}
\]
($v$ is defined up to a non-zero constant multiplier). Thus we come to eigenvectors \eqref{eq:eig vec1}.
\par 2. Next let $\lambda\not\in \tilde{S}_n$. Then from equation \eqref{eq:eigpr2} we get
\begin{gather}
 v_{j-1/2}=-G^{-1}(\lambda)[v_{j-1}g(\lambda)+v_j\check{g}(\lambda)],\ \ j=\overline{1,K}.
\label{eq:s3t1f3}
\end{gather}
Inserting this into equation \eqref{eq:eigpr1}, we find the three-point equation
\begin{gather}
 \hat{g}_n(\lambda)v_{j-1}+2\hat{g}_0(\lambda)v_j+\hat{g}_n(\lambda)v_{j+1}=0,\ \ j=\overline{1,K-1},
\label{eq:s3t1f5}
\end{gather}
where
\[
 \hat{g}_0(\lambda)=(g_0-g\cdot\widetilde{G}^{-1}g)(\lambda), \ \ \hat{g}_n(\lambda)=(g_n-\check{g}\cdot\widetilde{G}^{-1}g)(\lambda).
\]
\par Straightforwardly the following equalities hold
\begin{gather*}
 G(\lambda)\left(
\begin{array}{c}
 1\\[1mm]
 -(\widetilde{G}^{-1}g)(\lambda)\\[1mm]
 0
\end{array}
\right)=
\left(
\begin{array}{c}
 \hat{g}_0(\lambda)\\[1mm]
 0\\[1mm]
 \hat{g}_n(\lambda)
\end{array}
\right),\ \
 G(\lambda)\left(
\begin{array}{c}
 0\\[1mm]
 -(\widetilde{G}^{-1}\check{g})(\lambda)\\[1mm]
 1
\end{array}
\right)=
\left(
\begin{array}{c}
 \hat{g}_n(\lambda)\\[1mm]
 0\\[1mm]
 \hat{g}_0(\lambda)
\end{array}
\right).
\label{eq:s3t1f7}
\end{gather*}
If $\hat{g}_n(\lambda)=\hat{g}_0(\lambda)=0$, then the equalities mean that $\lambda$ is at least double eigenvalue for problem \eqref{eq:eig_elem} that contradicts assumption (A).
\par If $\hat{g}_n(\lambda)=0$ and $\hat{g}_0(\lambda)\neq 0$, then equation \eqref{eq:s3t1f5} together with \eqref{eq:s3t1f3} lead
to $v=0$ thus such $\lambda$ does not satisfy \eqref{eq:eig_glob}.
\par Therefore $\hat{g}_n(\lambda)\neq 0$ and equation \eqref{eq:s3t1f5} is simplified to
\begin{gather}
 v_{j-1}-2\hat{\gamma}(\lambda)v_j+v_{j+1}=0,\ \ j=\overline{1,K-1},
\label{eq:s3t1f9}
\end{gather}
with the function $\hat{\gamma}(\lambda)=\hat{g}_0(\lambda)/\hat{g}_n(\lambda)$ (see it also in \eqref{eq:theta0}).
Since $v_0=v_n=0$, we can use the expansion
\[
 v_j=\sum_{k=1}^{K-1}\tilde{v}_ks_{k,j},\ \ j=\overline{0,K},
\]
and define the vector $\tilde{{\bf v}}:=(\tilde{v}_1,\ldots,\tilde{v}_{K-1})$ of its coefficients.
Using the expansion in \eqref{eq:s3t1f9} gives
\begin{gather}
 2\sum_{k=1}^{K-1}\tilde{v}_k\big(\theta_k-\hat{\gamma}(\lambda)\big)s_{k,j}=0,\ \ j=\overline{1,K-1}.
\label{eq:s3t1f11}
\end{gather}
Clearly this equality is valid for some $\tilde{{\bf v}}\neq 0$
if and only if
\begin{gather}
 \hat{\gamma}(\lambda)=\theta_k\ \  \text{for some}\ \ k=\overline{1,K-1}.
\label{eq:s3t1f15}
\end{gather}
\par Notice that $\tilde{{\bf v}}=0$ is equivalent to $v=0$ in $S_K^{(n)}$ (taking into account formula \eqref{eq:s3t1f3}).
Therefore $\lambda$ satisfies \eqref{eq:s3t1f15}; moreover, $\tilde{v}_j=\delta_{kj}$ and consequently $v_j=s_{k,j}$, $j=\overline{0,K}$, together with
\[
 v_{j-1/2}=-s_{k,j-1}(G^{-1}g)(\lambda)-s_{k,j}(G^{-1}\check{g})(\lambda),\ \ j=\overline{1,K},
\]
see \eqref{eq:s3t1f3} (all last three equalities are valid up to the same non-zero multilplier).
Thus we come to eigenvectors \eqref{eq:eig vec2}.
\par The total amount of eigenvalues $\lambda\not\in \tilde{S}_n$ (taking into account their possible multiplicity) is
$\dim S_K^{(n)}-(n-1)=n(K-1)$.
The maximal amount of roots algebraic equations \eqref{eq:s3t1f15} for all $k$ is the same so that each equation \eqref{eq:s3t1f15} has to possess exactly $n$ distinct roots (for fixed $k$, the written eigenvector $v$ is defined by $\lambda$ uniquely).
\par 3. Property \eqref{eq:ort} is knowingly valid for eigenvectors $s_{k}^{(l)}$ and $s_{\tilde{k}}^{(\tilde{l})}$ corresponding to different eigenvalues of problem \eqref{eq:eig_glob}, in particular, for $k=0$ and $\tilde{k}\neq 0$, or $k=\tilde{k}$ and $l\neq\tilde{l}$.
The remaining case will be covered below in Corollary \ref{cor:lem:scalar_prod1} of the related Lemma \ref{lem:scalar_prod1}.
\end{proof}
\par Notice that:
(1) the vectors $s_0^{(l)}$ are used only to describe the algorithm, and only the vectors $e^{(l)}$ are applied in its implementation;
(2) $s_{k,j}^{(l)}$ are independent on $l$;
(3) the vectors $p_k^{(l)}$ are independent on $j$ and can also be computed owing to Lemma \ref{lem1}.
\begin{lemma}
\label{lem:scalar_prod1}
Let $w\in S_K^{(n)}$ and $w_{j-1/2}=qw_{j-1}+\check{q}w_j$, $j=\overline{1,K}$, for some $q\in\mathbb{R}^{n-1}$.
Then
\begin{gather}
 \big(\mathcal{C}s_0^{(l)},w\big)_{S_K^{(n)}}=0,\ \ l=\overline{1,n-1}.
\label{sp11}\\
 \big(\mathcal{C}s_k^{(l)},w\big)_{S_K^{(n)}}
\nonumber\\
 =2\big\{c_0+c\cdot p_k^{(l)}+\big(\widetilde{C}p_k^{(l)}+c\big)\cdot q
 +\theta_k\big[c_n+c\cdot\check{p}_k^{(l)}+\big(\widetilde{C}p_k^{(l)}+c\big)\cdot \check{q}\big]\big\}(s_k,w)_{\omega_h},
\label{sp1}
\end{gather}
for $k=\overline{1,K-1}$, $l=\overline{1,n}$,  where $(s_k,w)_{\omega_h}:=\sum_{j=1}^{K-1}s_{k,j}w_j$.
\end{lemma}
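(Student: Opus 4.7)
The plan is to exploit symmetry of $\mathcal{C}$, writing $(\mathcal{C}s,w)_{S_K^{(n)}}=(s,\mathcal{C}w)_{S_K^{(n)}}$, and then to compute $\mathcal{C}w$ explicitly from the block structure \eqref{eq:matrAC}: namely $(\mathcal{C}w)_j=c_nw_{j-1}+\check{c}\cdot w_{j-1/2}+2c_0w_j+c\cdot w_{j+1/2}+c_nw_{j+1}$ for $j=\overline{1,K-1}$ and $(\mathcal{C}w)_{j-1/2}=cw_{j-1}+\widetilde{C}w_{j-1/2}+\check{c}w_j$. Into these I substitute the hypothesis $w_{j-1/2}=qw_{j-1}+\check{q}w_j$ and collapse terms with the bisymmetry identities $\check{p}\cdot q=p\cdot\check{q}$ and $\check{p}\cdot\check{q}=p\cdot q$ from \eqref{eq:eodprop1} together with $\widetilde{C}P=P\widetilde{C}$ (whence $\widetilde{(\widetilde{C}q)}=\widetilde{C}\check{q}$).

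For \eqref{sp11}, only the half-node part of $(s_0^{(l)},\mathcal{C}w)_{S_K^{(n)}}$ survives since $s_{0,j}^{(l)}=0$. Here $e^{(l)}$ has definite parity (Lemma \ref{prop1}), and the same parity is inherited by $\widetilde{C}e^{(l)}$. When $e^{(l)}$ is even, $(-P)^{j-1}e^{(l)}=(-1)^{j-1}e^{(l)}$ and parity forces $e^{(l)}\cdot c=e^{(l)}\cdot\check{c}$ and $\widetilde{C}e^{(l)}\cdot q=\widetilde{C}e^{(l)}\cdot\check{q}$, so the sum reduces to a constant multiple of $\sum_{j=1}^K(-1)^{j-1}(w_{j-1}+w_j)$, which collapses using $w_0=w_K=0$. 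When $e^{(l)}$ is odd, $(-P)^{j-1}e^{(l)}=e^{(l)}$ and the opposite sign rules yield a constant times $\sum_{j=1}^K(w_{j-1}-w_j)=w_0-w_K=0$.

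For \eqref{sp1} I split the computation into a node and a half-node piece. After the substitution, the node row has coefficients $(c_n+c\cdot\check{q})$, $2(c_0+c\cdot q)$, $(c_n+c\cdot\check{q})$ attached to $w_{j-1},w_j,w_{j+1}$; invoking the trigonometric identity $s_{k,j-1}+s_{k,j+1}=2\theta_ks_{k,j}$ and $s_{k,0}=s_{k,K}=w_0=w_K=0$, this piece reduces by a shift-and-telescope to $2\bigl[(c_0+c\cdot q)+\theta_k(c_n+c\cdot\check{q})\bigr](s_k,w)_{\omega_h}$. For the half-node piece, setting $u:=c+\widetilde{C}q$ one gets $\check{u}=\widetilde{C}\check{q}+\check{c}$, so the sum becomes $(p_k^{(l)}\cdot u)(T_0+T_3)+(\check{p}_k^{(l)}\cdot u)(T_1+T_2)$ with $T_0=\sum s_{k,j-1}w_{j-1}=T_3=\sum s_{k,j}w_j=(s_k,w)_{\omega_h}$ and $T_1+T_2=\sum(s_{k,j-1}w_j+s_{k,j}w_{j-1})=2\theta_k(s_k,w)_{\omega_h}$ via the same trig identity and boundary zeros; finally $p_k^{(l)}\cdot\widetilde{C}q=\widetilde{C}p_k^{(l)}\cdot q$ and $\check{p}_k^{(l)}\cdot\widetilde{C}q=\widetilde{C}p_k^{(l)}\cdot\check{q}$ (by symmetry of $\widetilde{C}$ and $\widetilde{C}P=P\widetilde{C}$) convert the expression into the correction terms $c\cdot p_k^{(l)}+\widetilde{C}p_k^{(l)}\cdot q+\theta_k(c\cdot\check{p}_k^{(l)}+\widetilde{C}p_k^{(l)}\cdot\check{q})$. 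Adding the node and half-node pieces and regrouping gives \eqref{sp1}.

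The computation is entirely by direct substitution; no eigenvector property of $p_k^{(l)}$ is used. The main obstacle is bookkeeping: keeping track of which terms receive $\check{\phantom{p}}$ after application of bisymmetry/parity identities, and correctly carrying out the index shift that produces $T_1+T_2=2\theta_k(s_k,w)_{\omega_h}$ from $s_{k,j-1}+s_{k,j+1}=2\theta_ks_{k,j}$ under the boundary conditions $s_{k,0}=s_{k,K}=w_0=w_K=0$.
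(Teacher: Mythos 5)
Your proof is correct and essentially coincides with the paper's: the paper evaluates $\big(\mathcal{C}s_k^{(l)},w\big)_{S_K^{(n)}}$ by applying $\mathcal{C}$ to the eigenvector via the block formula \eqref{sp2} and then substituting $w_{j-1/2}=qw_{j-1}+\check{q}w_j$, which by symmetry of $\mathcal{C}$ is the mirror image of your computation of $\big(s_k^{(l)},\mathcal{C}w\big)_{S_K^{(n)}}$. All the key ingredients — the node/half-node split, the bisymmetry identities \eqref{eq:eodprop1} and $\widetilde{C}P=P\widetilde{C}$, the parity of $e^{(l)}$ for \eqref{sp11}, and the identity $s_{k,j-1}+s_{k,j+1}=2\theta_k s_{k,j}$ with index shifts and the boundary zeros — are exactly those used in the paper, and your bookkeeping checks out.
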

\begin{proof}
1. For any $v,w\in S_K^{(n)}$, recalling notation \eqref{eq:matrAC} we have
\begin{gather}
 (\mathcal{C}v,w)_{S_K^{(n)}}
 =\sum_{j=1}^{K-1}\big[2c_0v_j+c_n(v_{j-1}+v_{j+1})+\check{c}\cdot v_{j-1/2}+c\cdot v_{j+1/2}\big]w_j
\nonumber\\[1mm]
 +\sum_{j=1}^K\big(cv_{j-1}+\widetilde{C}v_{j-1/2}+\check{c}v_{j+1}\big)\cdot w_{j-1/2}.
\label{sp2}
\end{gather}

\par 2. According to formulas \eqref{sp2} and \eqref{eq:eig vec1}, for even $e^{(l)}$, we get
\begin{gather*}
 \big(\mathcal{C}s_0^{(l)},w\big)_{S_K^{(n)}}
 =\sum_{j=1}^{K-1}(-1)^j(c-\check{c})\cdot e^{(l)} w_j
 +\sum_{j=1}^K(-1)^{j-1}\widetilde{C}e^{(l)}\cdot (qw_{j-1}+\check{q}w_j)
\\[1mm]
 =\widetilde{C}e^{(l)}\cdot(q-\check{q})\sum_{j=1}^{K-1}(-1)^j w_j=0
\end{gather*}
since $(c-\check{c})\cdot e^{(l)}=0$ and $\widetilde{C}e^{(l)}\cdot(q-\check{q})=0$ for any $c,q\in\mathbb{R}^{n-1}$ as well as  $w_0=w_K=0$.
\par For odd $e^{(l)}$, we similarly get
\begin{gather*}
 \big(\mathcal{C}s_0^{(l)},w\big)_{S_K^{(n)}}
 =\sum_{j=1}^{K-1}(c+\check{c})\cdot e^{(l)} w_j
 +\sum_{j=1}^K\widetilde{C}e^{(l)}\cdot (qw_{j-1}+\check{q}w_j)
 =\widetilde{C}e^{(l)}\cdot(q+\check{q})\sum_{j=1}^{K-1}w_j=0
\end{gather*}
since $(c+\check{c})\cdot e^{(l)}=0$ and $\widetilde{C}e^{(l)}\cdot(q+\check{q})=0$ for any $c,q\in\mathbb{R}^{n-1}$ as well as $w_0=w_K=0$.
Equality \eqref{sp11} is proved.

\par 3. Formula \eqref{sp2} together with \eqref{eq:eig vec2} and \eqref{eq:eodprop1} imply that
\begin{gather}
 \big(\mathcal{C}s_k^{(l)},w\big)_{S_K^{(n)}}
 =\sum_{j=1}^{K-1}\big[2\big(c_0+c\cdot p_k^{(l)}\big)s_{k,j}+\big(c_n+c\cdot \check{p}_k^{(l)}\big)(s_{k,j-1}+s_{k,j+1})\big]w_j
\nonumber\\[1mm]
 +\sum_{j=1}^K\big[\big(\widetilde{C}p_k^{(l)}+c\big)s_{k,j-1}+\big(\widetilde{C}\check{p}_k^{(l)}+\check{c}\big)s_{k,j}\big]\cdot w_{j-1/2}
 =:\sum\nolimits^{'}+\sum\nolimits^{''}.
\label{sp4}
\end{gather}
\par Owing to formula
\begin{gather}
 s_{k,j-1}+s_{k,j+1}=2\theta_ks_{k,j}
\label{sp5}
\end{gather}
we first derive
\begin{gather}
 \sum\nolimits^{'}=2\big[c_0+c\cdot p_k^{(l)}+\theta_k(c_n+c\cdot \check{p}_k^{(l)})\big](s_k,w)_{\omega_h}.
\label{sp7}
\end{gather}
Second, using formulas \eqref{eq:eodprop1} and \eqref{eq:bisym}, we get
\[
 \sum\nolimits^{''}=\sum_{j=1}^K\big(\widetilde{C}p_k^{(l)}+c\big)\cdot q\,(s_{k,j-1}w_{j-1}+s_{k,j}w_j)
 +\big(\widetilde{C}p_k^{(l)}+c\big)\cdot\check{q}\,(s_{k,j-1}w_j+s_{k,j}w_{j-1}).
\]
Owing to $s_{k,0}=s_{k,K}=0$, $w_0=w_K=0$ as well as formulas \eqref{sp5},
we further derive
\begin{gather}
 \sum\nolimits^{''}=2\big(\widetilde{C}p_k^{(l)}+c\big)\cdot q\,(s_k,w)_{\omega_h}
 +\big(\widetilde{C}p_k^{(l)}+c\big)\cdot\check{q}\sum_{j=1}^{K-1}(s_{k,j-1}+s_{k,j+1})w_j
\nonumber\\[1mm]
 =2\big[\big(\widetilde{C}p_k^{(l)}+c\big)\cdot q+\theta_k\big(\widetilde{C}p_k^{(l)}+c\big)\cdot\check{q}\big](s_k,w)_{\omega_h}.
\label{sp9}
\end{gather}
Adding \eqref{sp7} and \eqref{sp9}, we prove \eqref{sp1}.
\end{proof}
\begin{corollary}
\label{cor:lem:scalar_prod1}
The orthogonality property \eqref{eq:ort} from Theorem \ref{th:eigpares}, Item 4 is valid.
\end{corollary}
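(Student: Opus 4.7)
The plan is to reduce the problem to the one case the theorem's proof left open and then dispatch it by a single application of Lemma \ref{lem:scalar_prod1}. Recall that whenever $s_k^{(l)}$ and $s_{\tilde k}^{(\tilde l)}$ correspond to \emph{distinct} eigenvalues of problem \eqref{eq:eig_glob}, the $\mathcal{C}$-orthogonality \eqref{eq:ort} is a standard consequence of $\mathcal{A}=\mathcal{A}^T$, $\mathcal{C}=\mathcal{C}^T>0$. Under assumption (A) and Theorem \ref{th:eigpares}, Item 1, this already covers the situations flagged in the theorem proof: $k=0$, $\tilde k\ne 0$ (because $\lambda_{\tilde k}^{(\tilde l)}\notin\tilde{S}_n$); $k=\tilde k=0$, $l\ne\tilde l$ (simplicity of eigenvalues in $\tilde{S}_n$); and $k=\tilde k\ne 0$, $l\ne\tilde l$ (the $n$ roots $\lambda_k^{(1)},\dots,\lambda_k^{(n)}$ of \eqref{eq:theta} with $\theta=\theta_k$ are pairwise distinct).

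The one remaining case is $k\ne\tilde k$ with both $k,\tilde k\in\overline{1,K-1}$, for which the corresponding eigenvalues may a priori coincide and the standard argument need not apply. Here I would invoke Lemma \ref{lem:scalar_prod1} by taking $w=s_{\tilde k}^{(\tilde l)}$. By \eqref{eq:eig vec2} this $w$ has exactly the structure required by the lemma: $w_j=s_{\tilde k,j}$ for $j=\overline{1,K-1}$, and $w_{j-1/2}=q\,w_{j-1}+\check q\,w_j$ with $q:=p_{\tilde k}^{(\tilde l)}\in\mathbb{R}^{n-1}$ (using $w_0=w_K=0$ and the fact that the Kronecker structure of \eqref{eq:eig vec2} matches $q w_{j-1}+\check q w_j$ after identifying $s_{\tilde k,j-1}$ and $s_{\tilde k,j}$).

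Formula \eqref{sp1} then gives
\[
 \big(\mathcal{C}s_k^{(l)},s_{\tilde k}^{(\tilde l)}\big)_{S_K^{(n)}}
 =2\,\Phi_{k,\tilde k}^{(l,\tilde l)}\,(s_k,s_{\tilde k})_{\omega_h},
\]
where $\Phi_{k,\tilde k}^{(l,\tilde l)}$ is the bracketed factor in \eqref{sp1} (its precise value is irrelevant here). The discrete sine orthogonality
\[
 (s_k,s_{\tilde k})_{\omega_h}=\sum_{j=1}^{K-1}\sin\tfrac{\pi k j}{K}\sin\tfrac{\pi\tilde k j}{K}=0\quad\text{for }k\ne\tilde k,\ k,\tilde k\in\overline{1,K-1},
\]
closes the argument.

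The only mildly delicate point is checking that the form of $s_{\tilde k}^{(\tilde l)}$ indeed fits the hypothesis of Lemma \ref{lem:scalar_prod1} with a \emph{single} vector $q$ independent of $j$; this is guaranteed by Theorem \ref{th:eigpares}, Item 3, which provides exactly such a $p_{\tilde k}^{(\tilde l)}$. Once that identification is made, the corollary follows immediately from \eqref{sp1} and discrete sine orthogonality, with no need to distinguish whether $\lambda_k^{(l)}=\lambda_{\tilde k}^{(\tilde l)}$ or not.
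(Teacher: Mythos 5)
Your proposal is correct and follows essentially the same route as the paper: the cases of distinct eigenvalues are handled by the standard symmetry argument, and the remaining case $k,\tilde{k}\in\overline{1,K-1}$, $k\neq\tilde{k}$ is settled by applying Lemma \ref{lem:scalar_prod1} with $w=s_{\tilde{k}}^{(\tilde{l})}$ (so $q=p_{\tilde{k}}^{(\tilde{l})}$) and invoking the discrete sine orthogonality $(s_k,s_{\tilde{k}})_{\omega_h}=0$ in formula \eqref{sp1}. This is exactly the paper's argument, only spelled out in more detail.
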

\begin{proof}
It remains to consider the case $k,\tilde{k}\in\overline{1,K-1}$ and $k\neq\tilde{k}$. Since then $(s_k,s_{\tilde{k}})_{\omega_h}=0$, the result follows from \eqref{sp1}.
\end{proof}
\par We call the calculation of $w\in S_K^{(n)}$ by the coefficients $w_{kl}$ of the expansion \eqref{eq:decomp} as \textit{the inverse $F_n$-transform} and the calculation of the coefficients $w_{kl}$ by $w\in S_K^{(n)}$ as \textit{the direct $F_n$-transform}.
We also consider related expansion of $y\in S_K^{(n)}$
\begin{gather}
 y=\sum_{l=1}^{n-1}\widetilde{y}_{0l}\mathcal{C}s_0^{(l)}+\sum_{k=1}^{K-1}\sum_{l=1}^n\widetilde{y}_{kl}\mathcal{C}s_k^{(l)}
\label{eq:decompC}
\end{gather}
and the calculation of the coefficients $\widetilde{y}_{kl}$ by $y\in S_K^{(n)}$ that we call as \textit{the direct $FC_n$-transform}.
\par Let us describe their fast FFT-based implementation.
\begin{theorem}
\label{th:eigpares 1}
1. The inverse $F_n$-transform can be implemented according to the following formulas
\begin{gather}
 w_j=\sum_{k=1}^{K-1}\Big(\sum_{l=1}^n w_{kl}\Big)\sin\frac{\pi kj}{K},\ \ j=\overline{1,K-1},
\label{eq:decomp_1}\\
 w_{j-1/2}=(-P)^{j-1}\sum_{l=1}^{n-1}w_{0l}e^{(l)}
\nonumber\\
 +2\sum_{k=1}^{K-1}d_{k,e}\cos\frac{\pi k}{2K}\sin\frac{\pi k(j-1/2)}{K}
 -2\sum_{k=1}^{K-1}d_{k,o}\sin\frac{\pi k}{2K}\cos\frac{\pi k(j-1/2)}{K},\ \ j=\overline{1,K},
\label{eq:decomp_2}
\end{gather}
where $d_{k,e}$ and $d_{k,o}$ are respectively even and odd components of the vector
$d_k:=\sum_{l=1}^n w_{kl}p_k^{(l)}$.
Note that $(-P)^{j-1}e=e$ for odd $j$ and $(-P)^{j-1}e=-\check{e}$ for even $j$ for any $e\in\mathbb{R}^{n-1}$.
\par The collection $\{w_j\}_{j=1}^{K-1}$ can be computed by the standard inverse FFT with respect to sines.
The collection  $\{w_{j-1/2}\}_{j=1}^K$ can be computed by $n-1$ modified inverse FFT related to  the centers of elements in the amount of $[n/2]$ with respect to sines and $[(n-1)/2]$  with respect to cosines using extensions $d_{K,e}:=0$ and $d_{0,o}:=0$, see algorithms DST-I, DST-III and DCT-III in \cite{BRY07}.
\smallskip\par 2. The direct $FC_n$-transform can be implemented basing on the standard formula
\begin{gather}
 \widetilde{y}_{kl}=\big(y,s_{k}^{(l)}\big)_{S_K^{(n)}}/\|s_k^{(l)}\|_{\mathcal{C}}^2.
\label{eq:coef decomp_2a}
\end{gather}
Here, first, for $k=0$, $l=\overline{1,n-1}$, the following formulas hold
\begin{gather}
  (y,s_0^{(l)})_{S_K^{(n)}}
 =\Big(\sum_{j=1}^K(-P)^{j-1}y_{j-1/2}\Big)\cdot e^{(l)}, \ \
 \|s_0^{(l)}\|_{\mathcal{C}}^2=K.
\label{eq:norms eigfa}
\end{gather}
\par Second, for $k=\overline{1,K-1}$, $l=\overline{1,n}$, the following formulas hold
\begin{gather}
 (y,s_k^{(l)})_{S_K^{(n)}}=\sum_{j=1}^{K-1}y_j\sin\frac{\pi kj}{K}
\nonumber\\
 +p_{k,e}^{(l)}\cdot\sum_{j=1}^{K-1}(y_{j-1/2}+y_{j+1/2})_e\sin\frac{\pi kj}{K}
 +p_{k,o}^{(l)}\cdot\sum_{j=1}^{K-1}(y_{j+1/2}-y_{j-1/2})_o\sin\frac{\pi kj}{K},
\label{eq:inv_fna}\\
 \|s_k^{(l)}\|_{\mathcal{C}}^2=K\Big\{c_0+\big(\widetilde{C}p_k^{(l)}+2c\big)\cdot p_k^{(l)}
 +\theta_k\big[c_n+\big(\widetilde{C}p_k^{(l)}+2c\big)\cdot \check{p}_k^{(l)}\big]\Big\}.
\label{eq:inv_fn2}
\end{gather}
Notice that the sums in formula \eqref{eq:inv_fna} are independent on $l$.
The collection of all these coefficients can be computed using $n$ standard direct FFTs  with respect to sines.
\smallskip\par 3. Similarly to Item 2, the direct $F_n$-transform can be implemented basing on the standard formula
\begin{gather}
 w_{kl}=\big(\mathcal{C}w,s_{k}^{(l)}\big)_{S_K^{(n)}}/\|s_k^{(l)}\|_{\mathcal{C}}^2.
\label{eq:coef decomp_2}
\end{gather}
Here, for $k=0$, $l=\overline{1,n-1}$, the following formula holds
\begin{gather}
  (\mathcal{C}w,s_0^{(l)})_{S_K^{(n)}}
 =\Big(\widetilde{C}\sum_{j=1}^K(-P)^{j-1}w_{j-1/2}\Big)\cdot e^{(l)}.
\label{eq:norms eigfC}
\end{gather}
\par For $k=\overline{1,K-1}$, $l=\overline{1,n}$, formula \eqref{eq:inv_fna} with $y:=\mathcal{C}w$ is applicable.
Alternatively, the following formula holds as well
\begin{gather}
 (\mathcal{C}w,s_k^{(l)})_{S_K^{(n)}}=2\big[c_0+c\cdot p_k^{(l)}
 +\theta_k\big(c_n+c\cdot\check{p}_k^{(l)}\big)\big]\sum_{j=1}^{K-1}w_j\sin\frac{\pi kj}{K}
\nonumber\\
 +q_{k,e}^{(l)}\cdot\sum_{j=1}^{K-1}(w_{j-1/2}+w_{j+1/2})_e\sin\frac{\pi kj}{K}
 +q_{k,o}^{(l)}\cdot\sum_{j=1}^{K-1}(w_{j+1/2}-w_{j-1/2})_o\sin\frac{\pi kj}{K},
\label{eq:inv_fnb}
\end{gather}
where $q_{k,e}^{(l)}$ and $q_{k,o}^{(l)}$ are respectively even and odd components of the vector $q_k^{(l)}:=\tilde{C}p_k^{(l)}+c$.
Once again all these coefficients can be computed using $n$ standard direct FFTs  with respect to sines.
\end{theorem}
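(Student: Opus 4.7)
My plan is to verify each of the three items by direct substitution of the explicit eigenvectors from Theorem \ref{th:eigpares} into the relevant inner-product or expansion formulas, combined with the even/odd decomposition \eqref{eq:eodecomp}, the pairing property $p_e\cdot q_o=0$ for any $p_e\in\mathbb{R}_e^{n-1}$, $q_o\in\mathbb{R}_o^{n-1}$, a few standard trigonometric identities, and Lemma \ref{lem:scalar_prod1}. Once the target sums are obtained, the FFT interpretation is immediate: every remaining summation over $j=\overline{1,K-1}$ against $\sin\frac{\pi kj}{K}$ is a DST-I, and the shifted sines and cosines at half-integer points in \eqref{eq:decomp_2} match DST-III and DCT-III.

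For Item 1, I first note that $s_{k,j}^{(l)}=s_{k,j}$ is independent of $l$, so summing \eqref{eq:decomp} at a node $j$ collapses immediately to \eqref{eq:decomp_1}. For the mid-element values I would split $p_k^{(l)}=p_{k,e}^{(l)}+p_{k,o}^{(l)}$ so that $p_k^{(l)}\sin\frac{\pi k(j-1)}{K}+\check{p}_k^{(l)}\sin\frac{\pi kj}{K}$ becomes $p_{k,e}^{(l)}\bigl(\sin\frac{\pi k(j-1)}{K}+\sin\frac{\pi kj}{K}\bigr)+p_{k,o}^{(l)}\bigl(\sin\frac{\pi k(j-1)}{K}-\sin\frac{\pi kj}{K}\bigr)$, and then apply sum-to-product to get $2\cos\frac{\pi k}{2K}\sin\frac{\pi k(j-1/2)}{K}$ and $-2\sin\frac{\pi k}{2K}\cos\frac{\pi k(j-1/2)}{K}$ respectively. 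Summing over $l$ and $k$ produces \eqref{eq:decomp_2} with $d_k=\sum_l w_{kl}p_k^{(l)}$. The $k=0$ contribution is \eqref{eq:eig vec1} verbatim.

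For Item 2, formula \eqref{eq:coef decomp_2a} is the standard expansion in a $\mathcal{C}$-orthogonal basis, guaranteed by Corollary \ref{cor:lem:scalar_prod1}. For $k=0$ the first part of \eqref{eq:norms eigfa} is just $\sum_j y_{j-1/2}\cdot(-P)^{j-1}e^{(l)}$ rewritten using $P^T=P$; the norm $\|s_0^{(l)}\|_{\mathcal{C}}^2=K$ comes from the nodal part of $(\mathcal{C}s_0^{(l)},s_0^{(l)})$ vanishing (because $s_{0,j}^{(l)}=0$) and the mid-element part equalling $\sum_j\widetilde{C}(-P)^{j-1}e^{(l)}\cdot(-P)^{j-1}e^{(l)}=K\,\widetilde{C}e^{(l)}\cdot e^{(l)}=K$, using bisymmetry of $\widetilde{C}$ and the scaling from the paragraph before Lemma \ref{lem1}. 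For $k\geq1$ I substitute \eqref{eq:eig vec2}, split $p_k^{(l)}$ into even/odd parts, and use that $p_{k,e}^{(l)}\cdot v=p_{k,e}^{(l)}\cdot v_e$ and $p_{k,o}^{(l)}\cdot v=p_{k,o}^{(l)}\cdot v_o$; reindexing $\sum_{j=1}^K(\cdot)\sin\frac{\pi k(j-1)}{K}$ as $\sum_{j=1}^{K-1}(\cdot)\sin\frac{\pi kj}{K}$ with the other index shifted yields \eqref{eq:inv_fna}. The norm \eqref{eq:inv_fn2} is obtained by applying Lemma \ref{lem:scalar_prod1} to $w=s_k^{(l)}$ with $q=p_k^{(l)}$ and $(s_k,s_k^{(l)})_{\omega_h}=\sum_{j=1}^{K-1}\sin^2\frac{\pi kj}{K}=K/2$.

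For Item 3, I would use $(\mathcal{C}w,s_k^{(l)})=(\mathcal{C}s_k^{(l)},w)$ and apply \eqref{sp2} with $v=s_k^{(l)}$. A short calculation using $\check{c}\cdot p=c\cdot\check{p}$, bisymmetry of $\widetilde{C}$ (so $\widetilde{C}\check{p}_k^{(l)}+\check{c}=\check{q}_k^{(l)}$ with $q_k^{(l)}=\widetilde{C}p_k^{(l)}+c$), and the three-term identity \eqref{sp5} shows that $(\mathcal{C}s_k^{(l)})_j=2[c_0+c\cdot p_k^{(l)}+\theta_k(c_n+c\cdot\check{p}_k^{(l)})]\,s_{k,j}$ and $(\mathcal{C}s_k^{(l)})_{j-1/2}=q_k^{(l)}s_{k,j-1}+\check{q}_k^{(l)}s_{k,j}$, i.e.\ the mid-element part has exactly the same structural form as $s_{k,j-1/2}^{(l)}$ itself but with $p_k^{(l)}$ replaced by $q_k^{(l)}$. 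The same reindexing as in Item 2 then delivers \eqref{eq:inv_fnb}; formula \eqref{eq:norms eigfC} for $k=0$ follows from $(\mathcal{C}s_0^{(l)})_j=0$ (a quick parity check for even and odd $e^{(l)}$) and $(\mathcal{C}s_0^{(l)})_{j-1/2}=(-P)^{j-1}\widetilde{C}e^{(l)}$. The main obstacle is none of these computations individually but the bookkeeping: correctly reindexing the shifted sums, tracking even/odd components of $w_{j-1/2}$, and propagating the boundary data $s_{k,0}=s_{k,K}=0$ and $w_0=w_K=0$ to eliminate stray boundary terms; this is where I would proceed most carefully.
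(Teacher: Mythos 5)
Your proposal is correct and follows essentially the same route as the paper: nodal collapse plus the even/odd split of $p_k^{(l)}$ and sum-to-product identities for \eqref{eq:decomp_1}--\eqref{eq:decomp_2}; the $\mathcal{C}$-orthogonality formula \eqref{eq:coef decomp_2a}/\eqref{eq:coef decomp_2} with the shifted-index reindexing and the pairings $p_e\cdot v=p_e\cdot v_e$, $p_o\cdot v=p_o\cdot v_o$ for \eqref{eq:inv_fna}; Lemma \ref{lem:scalar_prod1} with $q=p_k^{(l)}$ and $(s_k,s_k)_{\omega_h}=K/2$ for \eqref{eq:inv_fn2}; and the same structure $(\mathcal{C}s_k^{(l)})_{j-1/2}=q_k^{(l)}s_{k,j-1}+\check{q}_k^{(l)}s_{k,j}$ (the paper extracts this from \eqref{sp4}, \eqref{sp7} rather than recomputing it, a purely cosmetic difference) for \eqref{eq:inv_fnb} and \eqref{eq:norms eigfC}. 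No gaps.
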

\begin{proof}
1. Let the coefficients $w_{kl}$ of expansion \eqref{eq:decomp} be known.
According to the first formulas \eqref{eq:eig vec1} and \eqref{eq:eig vec2}, the values of $w$ for integer indices in \eqref{eq:decomp} are reduced to \eqref{eq:decomp_1}.
\par To compute $w$ for half-integer indices, we transform the second sum in \eqref{eq:decomp}.
Owing to decomposition \eqref{eq:eodecomp} we rewrite the second formula \eqref{eq:eig vec2} in the form
\begin{gather*}
  s_{k,j-1/2}^{(l)}=p_{k,e}^{(l)}\Big(\sin\frac{\pi k(j-1)}{K}+\sin\frac{\pi kj}{K}\Big)
                   -p_{k,o}^{(l)}\Big(\sin\frac{\pi kj}{K}-\sin\frac{\pi k(j-1)}{K}\Big)
\\[1mm]
  =2\cos\frac{\pi k}{2K}\,p_{k,e}^{(l)}\sin\frac{\pi k(j-1/2)}{K}
                   -2\sin\frac{\pi k}{2K}\,p_{k,o}^{(l)}\cos\frac{\pi k(j-1/2)}{K},\ \ j=\overline{1,K}.
\end{gather*}
Then using also the second formula \eqref{eq:eig vec1}, we obtain formula \eqref{eq:decomp_2}.
\par 2. Now we consider the computation of the coefficients in expansion \eqref{eq:decompC} for given $y\in S_K$.
Owing to the orthogonality property \eqref{eq:ort}, they first can be expressed in the form \eqref{eq:coef decomp_2a}
for $k=0$, $l=\overline{1,n-1}$ and $k=\overline{1,K-1}$, $l=\overline{1,n}$.
\par Formulas \eqref{sp2} and \eqref{eq:eig vec1} imply that
\begin{gather*}
 \|s_0^{(l)}\|_{\mathcal{C}}^2=K\widetilde{C}e^{(l)}\cdot e^{(l)}=K,\ \ l=\overline{1,n-1}.
\label{eq:norms eigf}
\end{gather*}
Lemma \ref{lem:scalar_prod1} immediately implies formula \eqref{eq:inv_fn2} since $(s_k,s_k)_{\omega_h}=K/2$.
\par By virtue of formulas \eqref{eq:eig vec1} for the numerator of formula \eqref{eq:coef decomp_2a} for $k=0$ we can write
\begin{gather*}
  (y,s_0^{(l)})_{S_K^{(n)}}
 =\sum_{j=1}^Ky_{j-1/2}\cdot (-P)^{j-1}e^{(l)}
 =\Big(\sum_{j=1}^K(-P)^{j-1}y_{j-1/2}\Big)\cdot e^{(l)}.
\end{gather*}
\par By virtue of formulas \eqref{eq:eig vec2} for the same numerator for $k=\overline{1,K-1}$ we get
\begin{gather}
 (y,s_k^{(l)})_{S_K^{(n)}}=\sum_{j=1}^{K-1}y_js_{k,j}
 +\sum_{j=1}^Ky_{j-1/2}s_{k,j-1}\cdot p_k^{(l)}
 +\sum_{j=1}^Ky_{j-1/2}s_{k,j}\cdot \check{p}_k^{(l)}.
\label{eq:inv_fna0}
\end{gather}
Therefore shifting by 1 the index in the second of these sums, and applying the identity
$a_1b_1+a_2b_2=0.5(a_1+a_2)(b_1+b_2)+0.5(a_1-a_2)(b_1-b_2)$ and recalling decomposition \eqref{eq:eodecomp}, we derive
\begin{gather*}
 (y,s_k^{(l)})_{S_K^{(n)}}
\\
 =\sum_{j=1}^{K-1}y_js_{k,j}
 +p_{k,e}^{(l)}\cdot\sum_{j=1}^{K-1}(y_{j-1/2}+y_{j+1/2})s_{k,j}
 +p_{k,o}^{(l)}\cdot\sum_{j=1}^{K-1}(y_{j+1/2}-y_{j-1/2})s_{k,j}.
\end{gather*}
Since also
\begin{gather*}
 p_e\cdot q =p_e\cdot q_e,\ \ p_o\cdot q =p_o\cdot q_o\ \ \text{for any}\ \ q,p\in \mathbb{R}^{n-1},
\label{eq:eodprop2}
\end{gather*}
we obtain formula \eqref{eq:inv_fna}.
\smallskip\par 3. Owing to the orthogonality property \eqref{eq:ort}, formula \eqref{eq:coef decomp_2} is valid.
\par By virtue of formulas \eqref{eq:eig vec1}, \eqref{sp2} as well as $\widetilde{C}^T=\widetilde{C}$ and $P=P^T$ for its numerator for $k=0$ we can write
\begin{gather*}
 \big(\mathcal{C}w,s_0^{(l)}\big)_{S_K^{(n)}}
 =\big(\mathcal{C}s_0^{(l)},w\big)_{S_K^{(n)}}
 =
 \sum_{j=1}^K\widetilde{C}(-P)^{j-1}e^{(l)}\cdot w_{j-1/2}
 =\Big(\widetilde{C}\sum_{j=1}^K(-P)^{j-1}w_{j-1/2}\Big)\cdot e^{(l)}.
\end{gather*}
\par By virtue of formulas \eqref{sp4}, \eqref{sp7} for the same numerator for $k=\overline{1,K-1}$ we get
\begin{gather*}
 \big(\mathcal{C}w,s_k^{(l)}\big)_{S_K^{(n)}}
 =\big(\mathcal{C}s_k^{(l)},w\big)_{S_K^{(n)}}
\\[1mm]
 =2\big[c_0+c\cdot p_k^{(l)}+\theta_k\big(c_n+c\cdot \check{p}_k^{(l)}\big)\big](s_k,w)_{\omega_h}
 +\sum_{j=1}^K\big(q_k^{(l)}s_{k,j-1}+\check{q}_k^{(l)}s_{k,j}\big)\cdot w_{j-1/2},
\end{gather*}
where $q_k^{(l)}=\tilde{C}p_k^{(l)}+c$.
Transforming the last sum in the same manner as above the second and third terms in \eqref{eq:inv_fna0}, we obtain \eqref{eq:inv_fnb}.
\end{proof}
\section{\large Applications to the generalized Poisson equation}
\label{sect5}
\setcounter{equation}{0}
\setcounter{proposition}{0}
\setcounter{theorem}{0}
\setcounter{lemma}{0}
\setcounter{corollary}{0}
\setcounter{remark}{0}
\par To begin with, we turn to the simple 1D ODE boundary value problem
\begin{gather}
 -u''(x)+\alpha u(x)=f(x)\ \ \text{on}\ \ [0,X],\ \ u(0)=u(X)=0,
\label{eq:diff bvp pr}
\end{gather}
where for simplicity $\alpha=\textrm{const}>-(\pi/X)^2$.
Its FEM discretization has the operator form
\begin{gather}
 4h^{-2}\mathcal{A}v+\alpha\mathcal{C}v=f^h,\ \ v\in S_K^{(n)},
\label{eq:bvp glob}
\end{gather}
where $f^h\in S_K^{(n)}$ is the FEM average of $f$.
Its solution can be written in the form
\begin{gather}
 v=\sum_{k=0}^K\,\sum_{l=1}^{n-\delta_{k0}}\frac{\widetilde{f}^h_{kl}}{4h^{-2}\lambda_k^{(l)}+\alpha}s_k^{(l)},
\label{eq:sol decomp 1}
\end{gather}
of the expansion like \eqref{eq:decomp}, where
$\widetilde{f}^h_{kl}$
are the coefficients of the expansion like \eqref{eq:decompC} for the vector $f^h$;
recall that $\delta_{k0}$ is the Kronecker delta.
\par Next we consider in detail solving of the $N$-dimensional ($N\geq 2$) boundary value problem
\begin{gather}
 -\Delta u+\alpha u=f\ \ \text{в}\ \ \Omega=(0,X_1)\times\ldots\times(0,X_N),\ \ u|_{\partial\Omega}=0,
\label{eq:diff bvp pr2}
\end{gather}
where $\Delta$ is the Laplace operator and $\alpha=\textrm{const}>-\pi^2\big(X_1^{-2}+\ldots+X_N^{-2}\big)$ (for simplicity, in order to treat the positive definite operator that actually is not so necessary).
\par We define the space $H_{h_1}^{(n_1)}[0,X_1]\otimes\ldots\otimes H_{h_N}^{(n_N)}[0,X_N]$ of the piecewise-poly\-no\-mial in
$\overline{\Omega}$ functions, where  $h_i=X_i/K_i$ and $n_i\geq 2$, $i=\overline{1,N}$.
Let $\mathbf{K}=(K_1,\ldots,K_N)$ and $\mathbf{n}=(n_1,\ldots,n_N)$.
\par We also define the space $S_{\mathbf{K}}^{(\mathbf{n})}=S_{K_1}^{(n_1)}\otimes\ldots\otimes S_{K_N}^{(n_N)}$ of vector functions.
For example, for $N=2$, these functions
are numbers for the indices $(j_1,j_2)$, $j_1=\overline{0,K_1}$, $j_2=\overline{0,K_2}$, and
vectors from $\mathbb{R}^{n_1-1}$, $\mathbb{R}^{n_2-1}$ and $\mathbb{R}^{(n_1-1)\times (n_2-1)}$ respectively for the indices
\begin{gather*}
(j_1-1/2,j_2), j_1=\overline{1,K_1}, j_2=\overline{0,K_2};\ \
(j_1,j_2-1/2), j_1=\overline{0,K_1}, j_2=\overline{1,K_2}\ \ \text{and}\ \
\\
(j_1-1/2,j_2-1/2), j_1=\overline{1,K_1}, j_2=\overline{1,K_2},
\end{gather*}
as well as zero vectors for $j_1=0,K_1$ and $j_2=0,K_2$.
Similarly to the 1D case, there is the natural isomorphism between
functions in $H_{h_1}^{(n_1)}[0,X_1]\otimes\ldots\otimes H_{h_N}^{(n_N)}[0,X_N]$ and vectors in $S_{\mathbf{K}}^{(\mathbf{n})}$.
\par The FEM dicretization of problem \eqref{eq:diff bvp pr2} can be written in the following operator form
\begin{gather}
 \big(4h_1^{-2}\mathcal{A}_1\mathcal{C}_2\ldots\mathcal{C}_N+\ldots+4h_N^{-2}\mathcal{A}_N\mathcal{C}_1\ldots\mathcal{C}_{m-1}\big)v
 +\alpha \mathcal{C}_1\ldots\mathcal{C}_Nv=f^h,\ \ v\in S_{\mathbf{K}}^{(\mathbf{n})},
\label{eq:bvp glob 2}
\end{gather}
where $\mathcal{A}_i$ and $\mathcal{C}_i$ are versions of the above defined operators $\mathcal{A}$ and $\mathcal{C}$ acting in variable  $x_i$ (depending on $K_i$ and $n_i$), $i=\overline{1,N}$, and $f^h\in S_{\mathbf{K}}^{(\mathbf{n})}$ is the FEM average of $f$.
Recall that the case of the non-homogeneous Dirichlet boundary condition $u(x)=b(x)$ on $\partial\Omega$ in \eqref{eq:diff bvp pr2} could be easily covered by reducing to \eqref{eq:bvp glob 2} with the modified $f^h$ depending on an approximation $b^h$ of $b$.
\par To compute its solution, the $F_n$-transforms from Theorem \ref{th:eigpares 1} can be applied twofold.
\smallskip\par (a)
We consider the multiple expansion of $f^h\in S_{\mathbf{K}}^{(\mathbf{n})}$ like \eqref{eq:decompC}
\begin{gather}
 f^h
 =\sum_{i=1}^N\sum_{k_i=0}^{K_i-1}\,\sum_{l_i=1}^{n_i-\delta_{k_i0}}
 \widetilde{f}^h_{k_1l_1,\ldots,k_Nl_N}\mathcal{C}_1s_{1,\,k_1}^{(l_1)}\ldots\mathcal{C}_Ns_{N,\,k_N}^{(l_N)}.
\label{eq:decomp phi 2}
\end{gather}
Then the expansion of the solution has the following form
\begin{gather}
 v=\sum_{i=1}^N\sum_{k_i=0}^{K_i-1}\,\sum_{l_i=1}^{n_i-\delta_{k_i0}}
 \frac{\widetilde{f}^h_{k_1l_1,\ldots,k_Nl_N}}
 {4h_1^{-2}\lambda_{1,\,k_1}^{(l_1)}+\ldots+4h_N^{-2}\lambda_{N,\,k_N}^{(l_N)}+\alpha}
 s_{1,\,k_1}^{(l_1)}\ldots s_{N,\,k_N}^{(l_N)}.
\label{eq:decomp v 2}
\end{gather}
Here $\big\{\lambda_{i,k_i}^{(l_i)},s_{i,k_i}^{(l_i)}\big\}$ are versions of the above defined eigenpairs
$\big\{\lambda_{k}^{(l)},s_{k}^{(l)}\big\}$ with respect to $x_i$.
\par Algorithm (a) comprises two rather standard steps:
\par (1) finding the coefficients of expansion \eqref{eq:decomp phi 2} for $f^h$ (by the direct $FC_n$-transforms in $x_1$,..., $x_N$);
\par (2) finding $v$ by the coefficients of its expansion \eqref{eq:decomp v 2} (by the inverse $F_n$-transforms in $x_1$,..., $x_N$).

\smallskip\par (b) We consider the expansion of $f^h$ like \eqref{eq:decompC} in $x_2$,..., $x_N$, i.e.
\begin{gather}
 f^h=
 \sum_{i=2}^N\sum_{k_i=0}^{K_i-1}\,\sum_{l_i=1}^{n_i-\delta_{k_i0}}\widetilde{f}^h_{k_2l_2,\ldots,k_Nl_N}
 \mathcal{C}_2s_{2,\,k_2}^{(l_2)}\ldots \mathcal{C}_Ns_{N,\,k_N}^{(l_N)},
\label{eq:decomp phi}
\end{gather}
now with the coefficients $\widetilde{f}^h_{k_2l_2,\ldots,k_Nl_N}\in S_{K_1}^{(n_1)}$.
Then the coefficients $v_{kl}\in S_{K_1}^{(n_1)}$ in the similar expansion of the solution $v\in S_{\mathbf{K}}^{(\mathbf{n})}$
\begin{gather}
 v=
 \sum_{i=2}^N\sum_{k_i=0}^{K_i-1}\,\sum_{l_i=1}^{n_i-\delta_{k_i0}}
 v_{k_2l_2,\ldots,k_Nl_N}s_{2,\,k_2}^{(l_2)}\ldots s_{m,\,k_N}^{(l_N)},
\label{eq:decomp v}
\end{gather}
serve as the solutions to 1D problems in $x_1$
\begin{gather}
 \big[4h_1^{-2}\mathcal{A}_1
 +\big(4h_2^{-2}\lambda_{k_2}^{(l_2)}+\ldots+4h_N^{-2}\lambda_{k_N}^{(l_N)}+\alpha\big)\mathcal{C}_1\big]v_{k_2l_2,\ldots,k_Nl_N}
 =\widetilde{f}^h_{k_2l_2,\ldots,k_Nl_N}.
\label{eq:sys x1 2}
\end{gather}
Their matrices are symmetric and positive definite.
\par Algorithm (b) comprises three rather standard steps:
\par (1)  finding the coefficients of the expansion \eqref{eq:decomp phi} for $f^h$ (by the direct $FC_n$-transforms in $x_2$,..., $x_N$);
\par (2) solving the collection of the independent 1D problems \eqref{eq:sys x1 2} for the coefficients of the expansion of $v$;
\par (3) finding $v$ by the coefficients of its expansion  \eqref{eq:decomp v} (by the inverse $F_n$-transforms in $x_2$,..., $x_N$).

\par Implementing algorithms (a) and (b) costs respectively $O\big(K_1\ldots K_N\log_2(K_1\ldots K_N)\big)$ and
$O\big(K_1\ldots K_N\log_2(K_2\ldots K_N)\big)$ arithmetic operations.
\par Importantly, they can be applied to solve various time-dependent PDEs such as the heat, wave or Schr\"{o}dinger's equations since for their implicit time discretizations one usually gets problems like \eqref{eq:bvp glob 2} at the upper time level.

\par Moreover, algorithm (b) is directly extended to the case of more general equations than in \eqref{eq:diff bvp pr2} with the coefficients depending on $x_1$ (that is essential, in particular, in the polar and cylindrical coordinates), various boundary conditions for $x_1=0,X_1$ and the nonuniform mesh in $x_1$ \cite{SN78}.
It can also be applied for reducing 3D problems in a cylindrical domain to a collection of independent 2D problems in the cylinder base.

\section{\large Numerical experiments}
\label{sect6}
\setcounter{equation}{0}
\setcounter{proposition}{0}
\setcounter{theorem}{0}
\setcounter{lemma}{0}
\setcounter{corollary}{0}
\setcounter{remark}{0}

\par 1. We first check that the eigenvalues of each of problems \eqref{eq:eig_elem} and \eqref{eq:eig_elemt} are well separated.
We define their spectral gaps as
\[
 \min_{1\leq l\leq n}(\lambda_{0(l+1)}-\lambda_{0(l)})=\frac{\pi^2}{4}+\delta_n,\ \
 \min_{1\leq l\leq n-2}(\lambda_0^{(l+1)}-\lambda_0^{(l)})=\frac{3\pi^2}{4}+\tilde{\delta}_n,
\]
where $S_n=:\{\lambda_{0(l)}\}_{l=1}^{n+1}$ and present $\tilde{\delta}_n$ and $\tilde{\delta}_n$ in Fig. \ref{fig:con num and spectral gap} (left).
The terms $\pi^2/4$ and $3\pi^2/4$ are the spectral gaps (in fact, the gaps between two minimal eigenvalues) of the corresponding ODE problems, see \cite{ZZ12}.
We observe that both $\delta_n$ and $\tilde{\delta}_n$ are decreasing and rapidly tend to 0 as $n$ increases.
We also checked that $S_n\cap\tilde{S}_n=\emptyset$ for all $2\leq n\leq 21$.
\par Also we give the spectral condition numbers $\cond\widetilde{A}$ and $\cond\widetilde{C}$ in
Fig. \ref{fig:con num and spectral gap} (right) and remark their rapid growth as $n$ increases (unfortunately).
\par Notice that all our computations are accomplished on an ordinary notebook ASUS-U36S with Intel Core i3-2350M CPU 2.3 GHz, 8 Gb, Win 10~x64. The codes in Matlab R2016a were developed to implement the algorithms, and
we emphasize that several basic and advanced \cite{A14} code vectorisation techniques were applied to speed up them notably.
\begin{figure}[htbp]\centering{
    \begin{minipage}[h]{0.49\linewidth}
        \center{\includegraphics[width=1\linewidth]{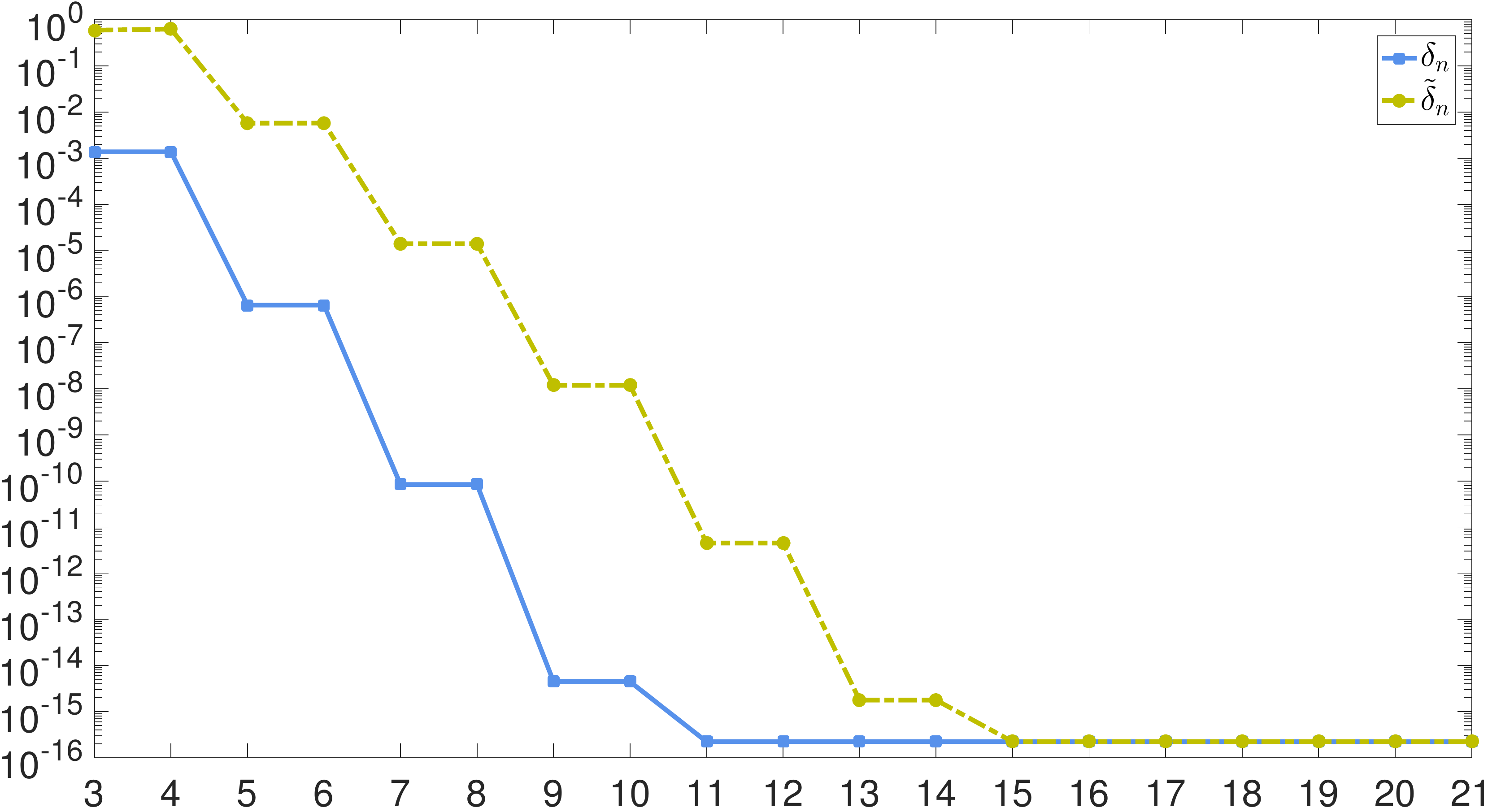}}
    \end{minipage}
    \begin{minipage}[h]{0.49\linewidth}
        \center{\includegraphics[width=1\linewidth]{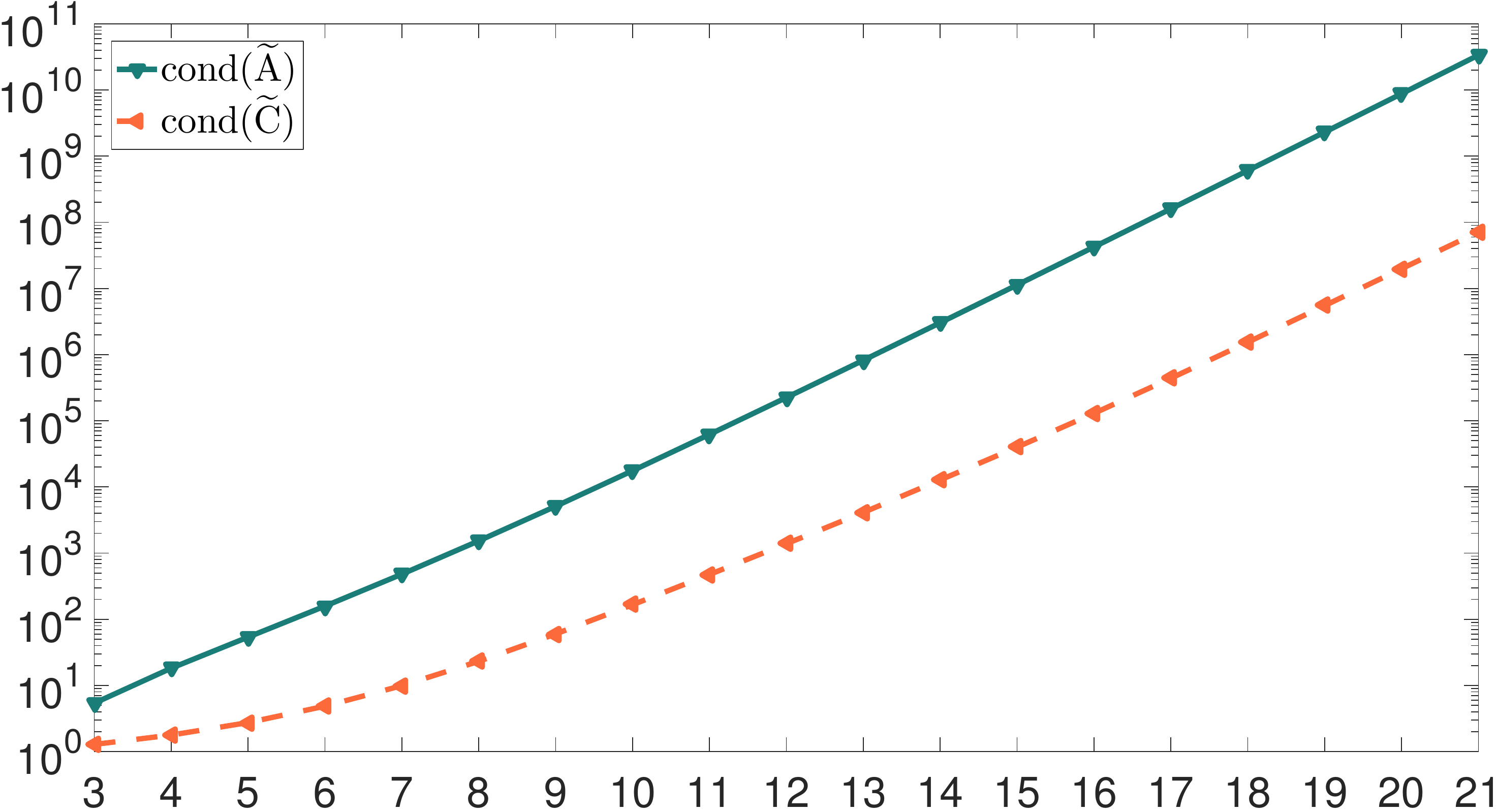}}
    \end{minipage}
    }
\caption{\small{The numbers $\delta_n$ and $\tilde{\delta}_n$ related to the minimal spectral gaps of eigenvalue problems \eqref{eq:eig_elem} and \eqref{eq:eig_elemt} (left) as well as  $\cond\widetilde{A}$ and $\cond\widetilde{C}$ (right) for problem \eqref{eq:eig_elemt} in dependence on $n$}}
\label{fig:con num and spectral gap}
\end{figure}

In the case of the 1D ODE problem \eqref{eq:bvp glob} for $\alpha=1$, we provide the condition numbers of the global FEM matrix $4h^{-2}\mathcal{A}+\alpha \mathcal{C}$ for $\alpha=1$
and its local version in dependence on $K=2,4,...,1024$ for $n=\overline{1,9}$ in Fig. \ref{fig:Cond:K} for a further reference.
Notice their rather rapid growth as $K$ increases.
\begin{figure}[htbp]\centering{
    \begin{minipage}[h]{0.49\linewidth}
        \center{\includegraphics[width=1\linewidth]{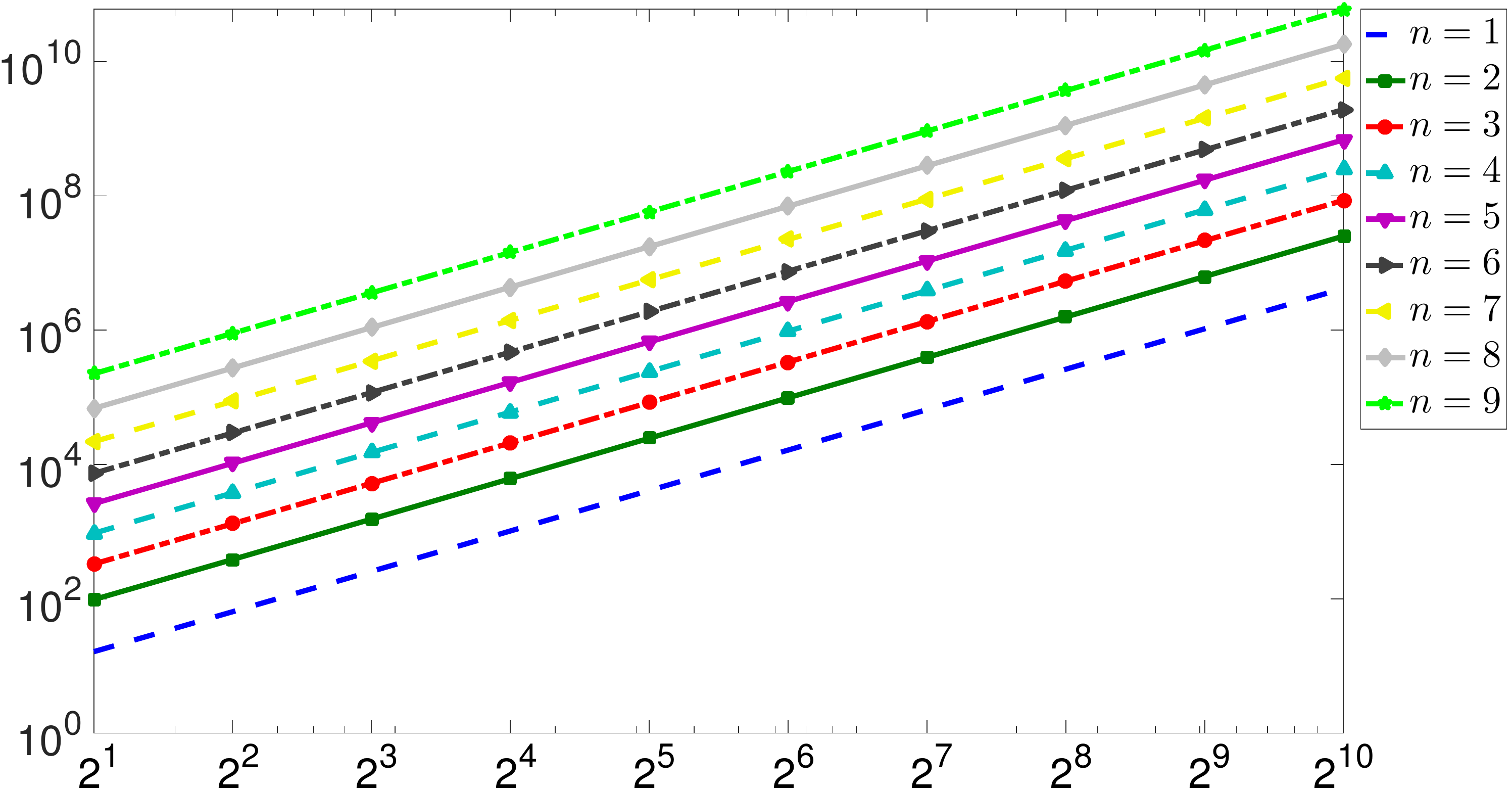}}
    \end{minipage}
    \begin{minipage}[h]{0.49\linewidth}
        \center{\includegraphics[width=1\linewidth]{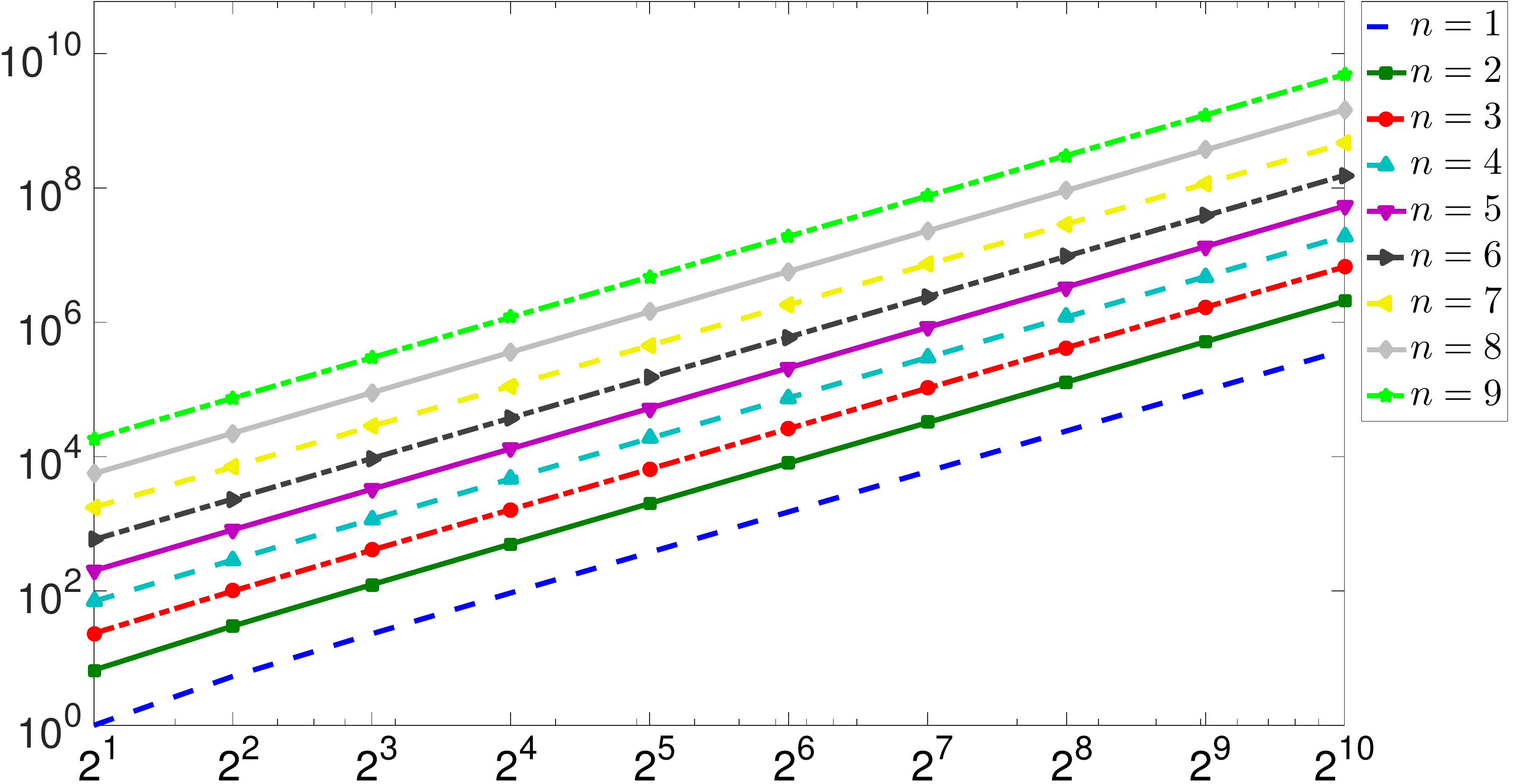}}
    \end{minipage}
    }\caption{\small{The condition numbers of the local (left) and global (right) matrices in 1D problem \eqref{eq:bvp glob} for $\alpha=1$ in dependence on $K=2,4,...,1024$ for $n=\overline{1,9}$}}
\label{fig:Cond:K}
\end{figure}
\par Next we analyze the execution time for $F_n$-transforms in dependence on the choice of $K$ as it grows up to very high values $2^{20}$.
We consider three choices of $K$: powers of two ($K=2^p$), primes or other composite
numbers (not powers of two).
Primes and composite values of $K$ are randomly chosen as different ten numbers between each two consecutive powers of two, and
the comparison is accomplished in the spirit of \cite{PrimeAnalysis}.
We use the external function {\tt comp\_dst} for DST-I and DST-III  and {\tt comp\_dct} for DCT-III from Large Time-Frequency Analysis Toolbox \cite{LTFAT} based on FFTW library \cite{FJ05}, which is also used in the {\bf Matlab} function {\tt fft} for FFT.
\par  The execution times for the DST-I and DST-III as well as our transforms in the case $n=5$ (for definiteness) are given respectively in Fig. \ref{fig:TIME:DST:ASUS}  and Fig. \ref{fig:TIME:FEMFFT:ASUS}.
We do not take into account the execution time for the pairs $\{\lambda^{(l)}_k,p_k^{(l)}\}$ since it becomes negligible for multiple using of the transforms for fixed $K$ as required below.
For our transforms the execution times are respectively larger (than for the DST-I and DST-III) due to multiple using of FFTs and some additional computations.
The inverse $F_n$-transform takes less time than the direct one.
Moreover, the best results are mainly for $K=2^p$ though this is not the case for DST-I (fortunately, we apply it in the optimal case $K=2^p-1$).
For the two other choices of $K$ the execution times are worse but close, and the difference between all of them is less than in the case of both DST-I and DST-III. These results are attractive.
Notice that the data in Fig. \ref{fig:TIME:FEMFFT:ASUS} can be approximated by linear functions for $2^5\leq K\leq 2^{10}$ and $2^{10}\leq K\leq 2^{20}$ but with rather flat slope in the former case and a visibly higher slope in the latter one.
\begin{remark}
\label{archit}
The last phenomenon is explained by the advanced architecture of modern processors involving cache memory, streaming SIMD extensions and advanced vector extensions of the instruction set, etc.
Also the above mentioned high-quality implementations of FFTs are applied.
\end{remark}
\begin{figure}[htbp]\centering{
    \begin{minipage}[h]{0.49\linewidth}             
        \center{\includegraphics[width=1\linewidth]{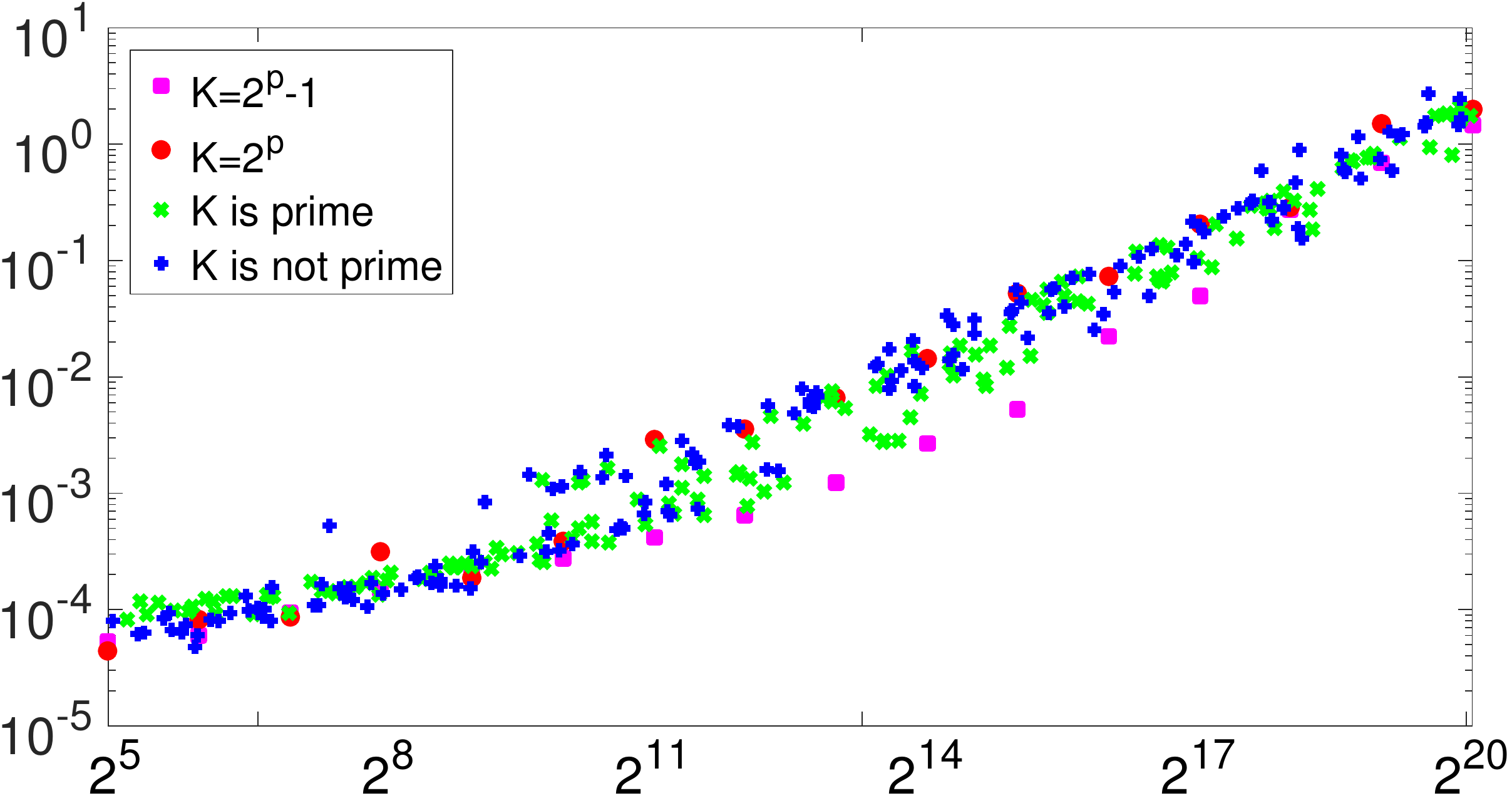}}
    \end{minipage}
    \begin{minipage}[h]{0.49\linewidth}
        \center{\includegraphics[width=1\linewidth]{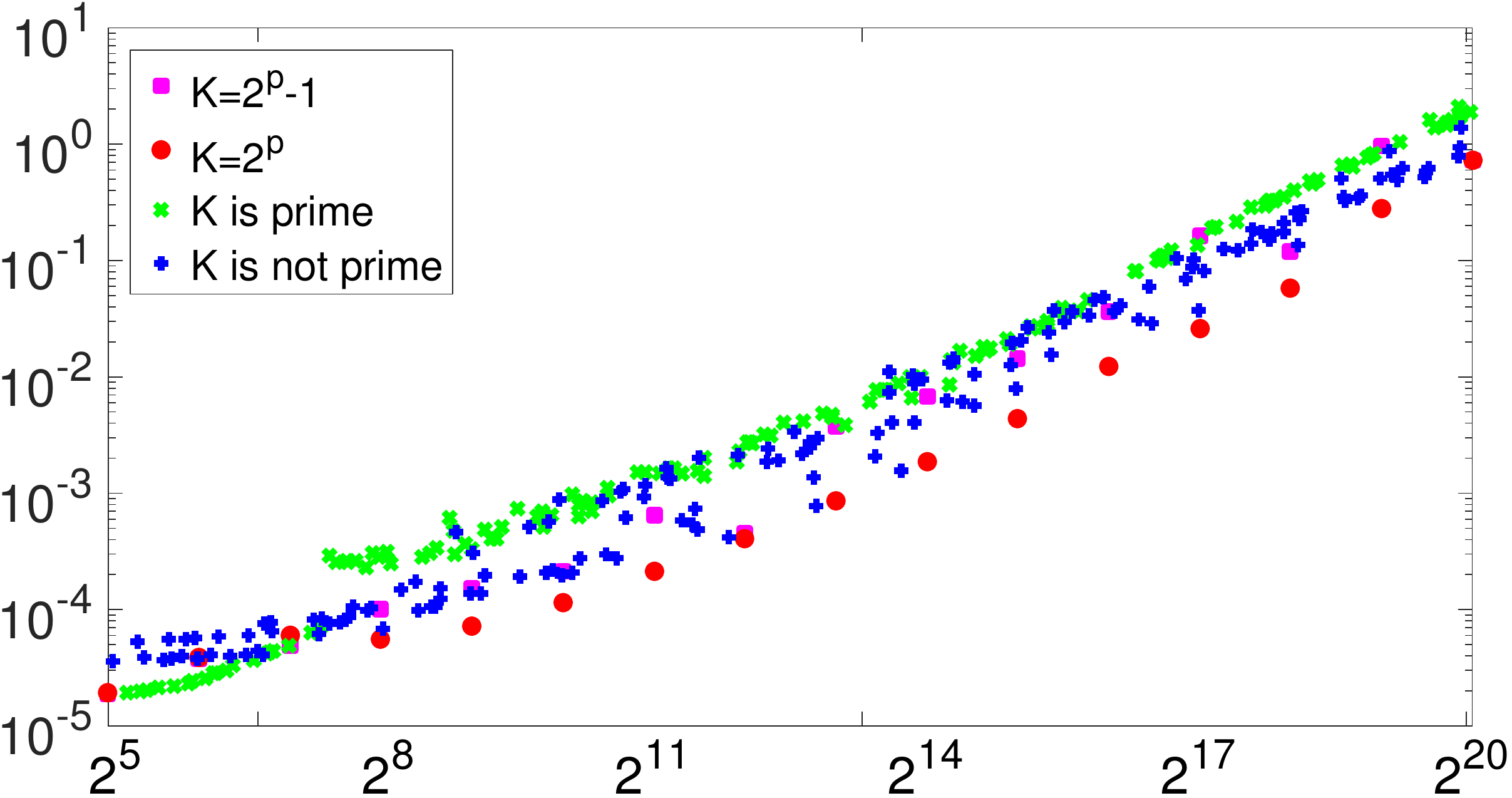}}
    \end{minipage}
    }\caption{\small{The execution time (in seconds) for DST-I (left) and DST-III (right) for the several choices of $K$ in between $2^5$ and $2^{20}$}}
\label{fig:TIME:DST:ASUS}
\end{figure}
\begin{figure}[htbp]\centering{
    \begin{minipage}[h]{0.49\linewidth}
        \center{\includegraphics[width=1\linewidth]{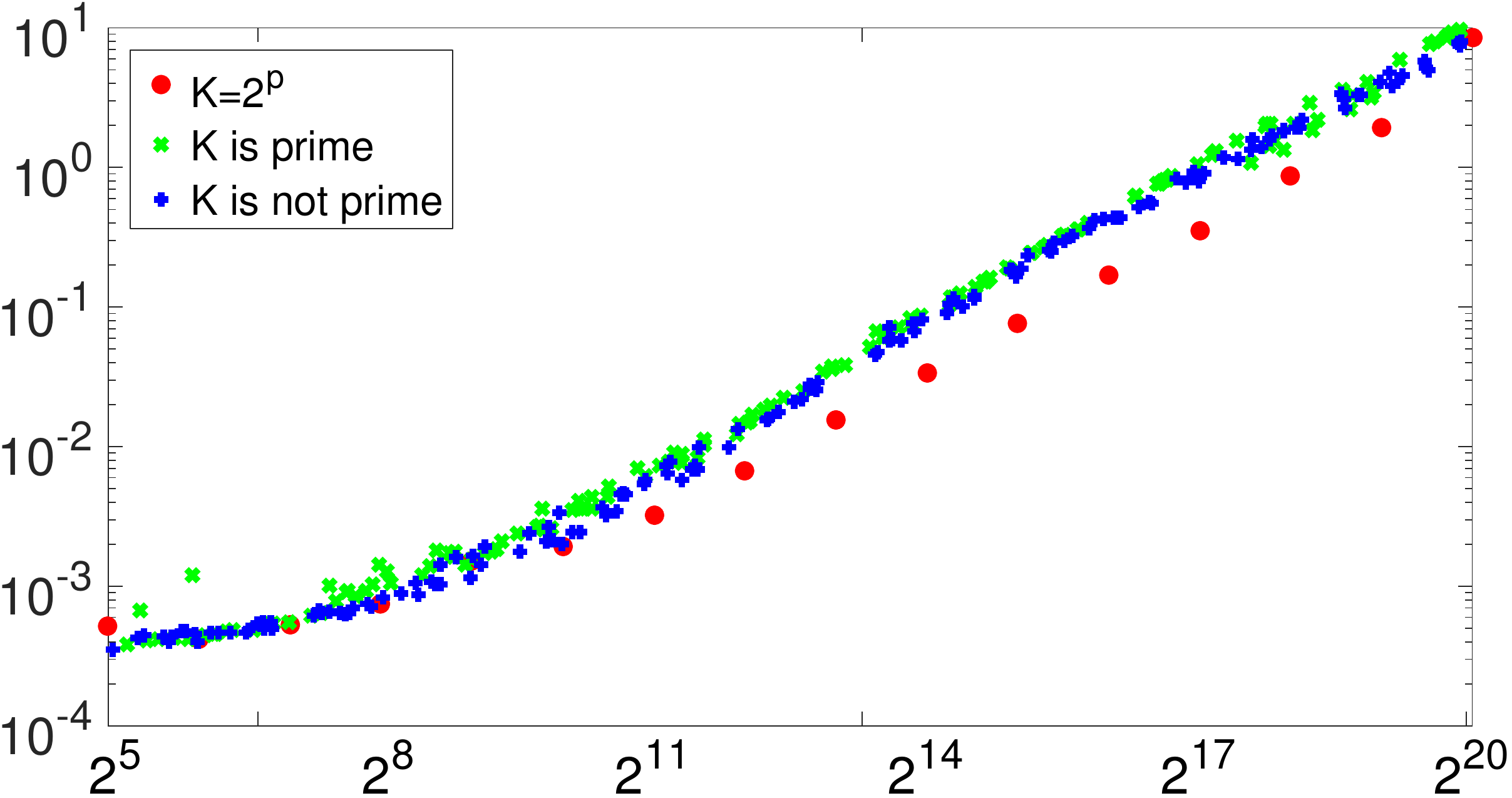}}
    \end{minipage}
    \begin{minipage}[h]{0.49\linewidth}
        \center{\includegraphics[width=1\linewidth]{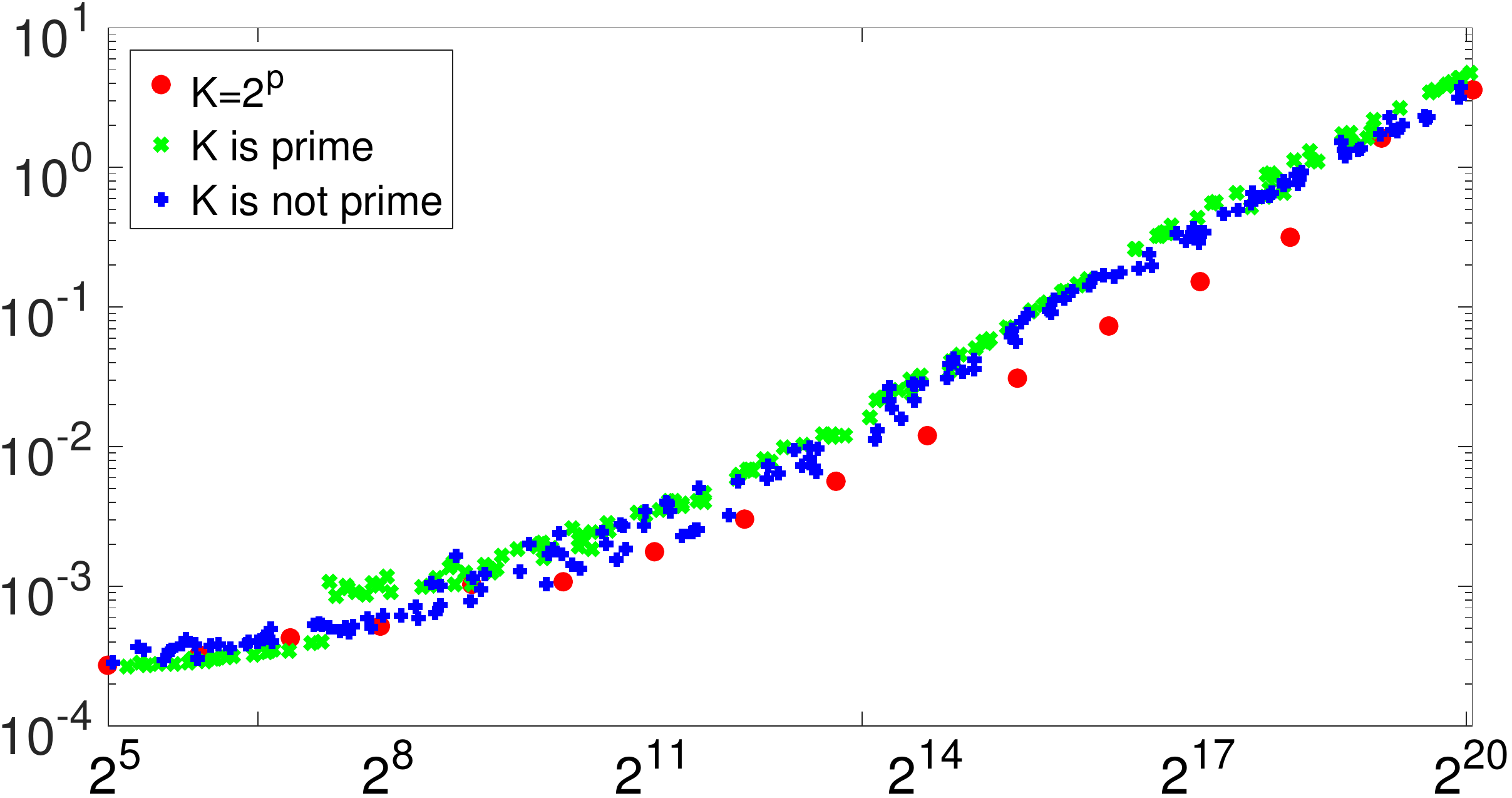}}
    \end{minipage}
    }\caption{\small{The execution time (in seconds) for the direct (left) and inverse (right) $F_n$-transforms, $n=5$, for the several choices of $K$ in between $2^5$ and $2^{20}$}}
\label{fig:TIME:FEMFFT:ASUS}
\end{figure}
\smallskip\par 2. Our main computational results concern solving problem \eqref{eq:diff bvp pr2} for both $N=2$ and 3, $\alpha=1$ and $X_i=1$.
We take $K_i=K$ and $n_i=n$ for simplicity.
We apply the multiple Gauss quadrature formulas with $n+1$ nodes in $x_i$ to compute $f^h$.
The eigenpairs of the 1D problems are computed with the quadruple precision (using Mathematica) to improve the stability with respect to round-off errors.
We notice that system \eqref{eq:bvp glob 2} contains $(Kn-1)^N$ unknowns.
For comparison, we include the simplest known case $n=1$ implemented in the code in the unified manner.

\par We first consider the 2D case ($N=2$) and take the exact solution
$u(x):=\sin(2\pi x_1)\sin(3\pi x_2)$ $\cosh\big(\sqrt{2}x_1-x_2\big)$.
We compute the FEM solution for different values of $n$ and $K$ in order to study the (absolute) error behaviour, see Fig. \ref{fig:EX72:2D:ErrorC} and Table \ref{tab:EX72:2D:FFT:ErrorCabs} (where values of $R_C$ less than 3 are omitted).
To reduce the possible round-off errors, hereafter we compute the pairs $\{\lambda^{(l)}_k,p_k^{(l)}\}$ with the quadruple precision.
For algorithm (a), the error behaviour in the uniform mesh norm is standard: the rate of its decreasing $R_C$ is mainly proportional to $(n+1)^2$ except for $n=2$ when it is faster and similar to $n=3$ (the exception has previously been noted in \cite{DFN09}).
Curiously, for $n=9$ and the minimal $K=2$ the error is already less than for $n=1$ and the maximal $K=1024$.
Of course, the error cannot be better than the level of round-off errors that is achieved the faster, the higher $5\leq n\leq 9$.
We observe that impact of the round-off errors is almost absent.
For algorithm (b), the situation is similar only up to the error level $\sim$$10^{-11}$.
Once this value is reached for fixed $3\leq n\leq 9$, we further see the error growth as $K$ increases that means the perceptible
impact of the round-off errors.
This is due to the respective growth of condition numbers for matrices in system \eqref{eq:sys x1 2} as $K$ or $n$ increases, see above Fig. \ref{fig:Cond:K}.
Consequently algorithm (a) is preferable than (b) provided that very high precision is required.
Notice that if we use the double precision olny, then the level of the best error becomes at $\sim$$10^{-12}-10^{-13}$ and the results remain stable for algorithm (a) but they remain practically unchanged for algorithm (b).
Thus only the double precision computations are possible provided that the mentioned accuracy is sufficient (that is the case in a lot of applications).
\par We also analyze the execution time for both algorithms (using multiple program runs
and their median execution time), see Fig. \ref{fig:TIME:2D:Tsol} and Tables \ref{tab:TIME:2D:FFT:Tsol} and \ref{tab:TIME:2D:CHOL:Tsol} for the same $K$ and $n$. Clearly this time is independent of the above specific choice of $u$.
We do not take into account the execution time for the pairs $\{\lambda^{(l)}_k,p_k^{(l)}\}$ considering the case where they are computed in advance and stored (recall that they are independent of the data of PDE problem \eqref{eq:diff bvp pr2}).
We call attention to the weakly superlinear behaviour of time in $K$ and its mild monotone growth in $n$.
Notice that all the ratios of the consecutive execution times in the both tables are even less than the lower bound 4 for the theoretical ratios (the ratios less than one are omitted); see Remark \ref{archit} in this respect.
In contrast to theoretical expectations, almost all ratios for algorithm (a) are less than for (b).
The ratios grows as $K$ and $n$ increase, and for algorithm (b) the highest value is already close to 4.
For the maximal $K=1024$ and $n=9$, system \eqref{eq:bvp glob 2} contains almost $85\cdot10^6$ unknowns but only less than 2 min is required to solve it that is the excellent result.
\begin{figure}[htbp]\centering{
    \begin{minipage}[h]{0.49\linewidth}
        \center{\includegraphics[width=1\linewidth]{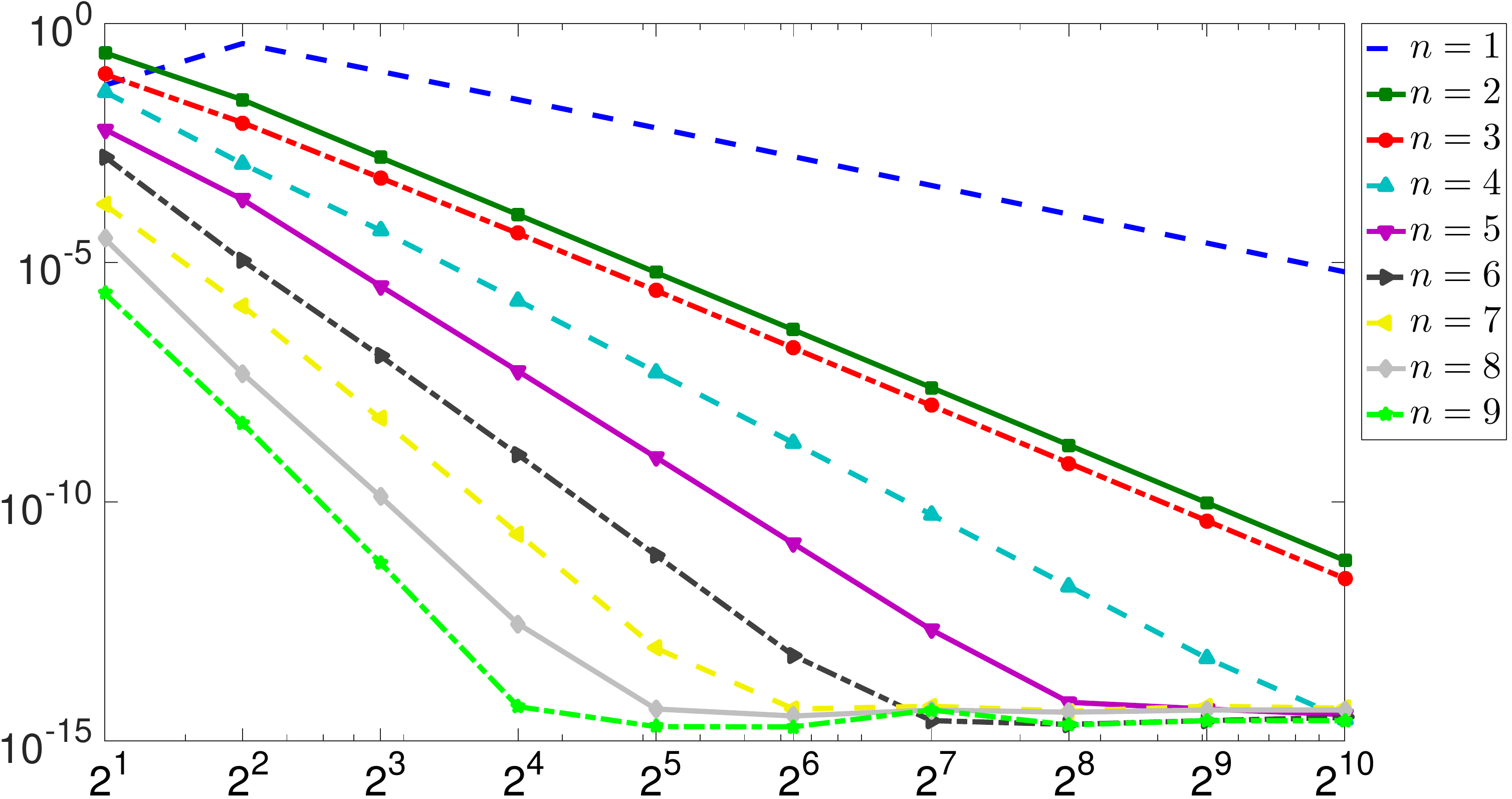}}\\ (a)
    \end{minipage}
    \begin{minipage}[h]{0.49\linewidth}
        \center{\includegraphics[width=1\linewidth]{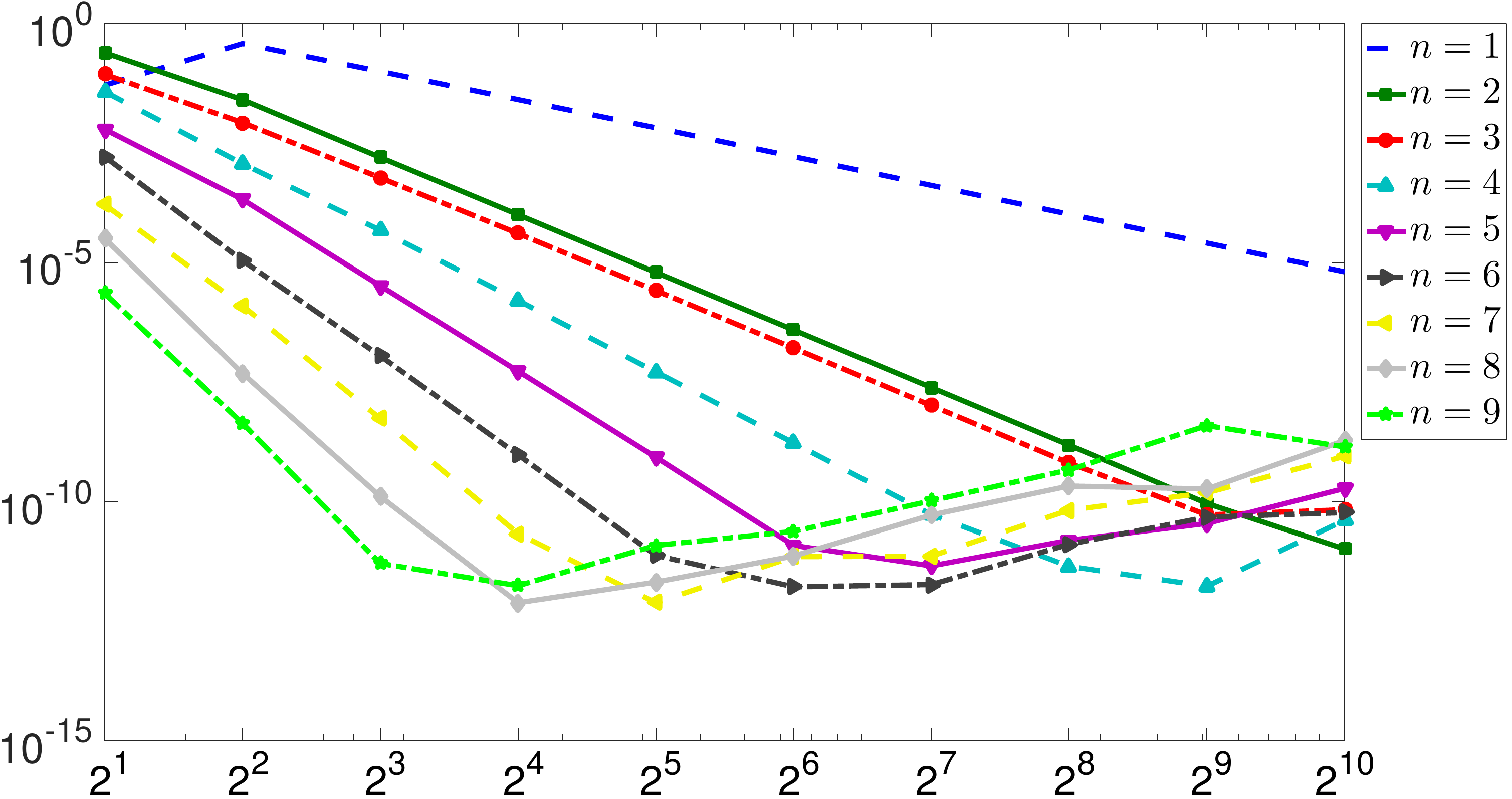}}\\ (b)
    \end{minipage}
    }\caption{\small{The 2D case. The errors in the mesh uniform norm for algorithms (a) and (b) in dependence on $K=2,4,...,1024$ for $n=\overline{1,9}$}}
\label{fig:EX72:2D:ErrorC}
\end{figure}
\begin{table}\centering{
\begin {tabular}{r<{\pgfplotstableresetcolortbloverhangright }@{}l<{\pgfplotstableresetcolortbloverhangleft }r<{\pgfplotstableresetcolortbloverhangright }@{}l<{\pgfplotstableresetcolortbloverhangleft }r<{\pgfplotstableresetcolortbloverhangright }@{}l<{\pgfplotstableresetcolortbloverhangleft }r<{\pgfplotstableresetcolortbloverhangright }@{}l<{\pgfplotstableresetcolortbloverhangleft }r<{\pgfplotstableresetcolortbloverhangright }@{}l<{\pgfplotstableresetcolortbloverhangleft }r<{\pgfplotstableresetcolortbloverhangright }@{}l<{\pgfplotstableresetcolortbloverhangleft }r<{\pgfplotstableresetcolortbloverhangright }@{}l<{\pgfplotstableresetcolortbloverhangleft }r<{\pgfplotstableresetcolortbloverhangright }@{}l<{\pgfplotstableresetcolortbloverhangleft }r<{\pgfplotstableresetcolortbloverhangright }@{}l<{\pgfplotstableresetcolortbloverhangleft }r<{\pgfplotstableresetcolortbloverhangright }@{}l<{\pgfplotstableresetcolortbloverhangleft }r<{\pgfplotstableresetcolortbloverhangright }@{}l<{\pgfplotstableresetcolortbloverhangleft }}%
\toprule \multicolumn {2}{c}{$K$}&\multicolumn {2}{c}{$n=1$}&\multicolumn {2}{c}{$R_{C}$}&\multicolumn {2}{c}{$n=2$}&\multicolumn {2}{c}{$R_{C}$}&\multicolumn {2}{c}{$n=3$}&\multicolumn {2}{c}{$R_{C}$}&\multicolumn {2}{c}{$n=4$}&\multicolumn {2}{c}{$R_{C}$}&\multicolumn {2}{c}{$n=5$}&\multicolumn {2}{c}{$R_{C}$}\\\midrule %
$2$&$$&$5$&$.1\cdot 10^{-2}$&--&&$2$&$.4\cdot 10^{-1}$&--&&$8$&$.7\cdot 10^{-2}$&--&&$3$&$.7\cdot 10^{-2}$&--&&$6$&$.1\cdot 10^{-3}$&--&\\%
$4$&$$&$3$&$.8\cdot 10^{-1}$&--&&$2$&$.5\cdot 10^{-2}$&$9$&$.6$&$8$&$.4\cdot 10^{-3}$&$10$&$.3$&$1$&$.2\cdot 10^{-3}$&$32$&$.2$&$2$&$.1\cdot 10^{-4}$&$29$&$.3$\\%
$8$&$$&$1$&$.0\cdot 10^{-1}$&$3$&$.8$&$1$&$.6\cdot 10^{-3}$&$16$&$.1$&$5$&$.9\cdot 10^{-4}$&$14$&$.2$&$4$&$.7\cdot 10^{-5}$&$24$&$.5$&$3$&$.3\cdot 10^{-6}$&$63$&$.8$\\%
$16$&$$&$2$&$.6\cdot 10^{-2}$&$3$&$.9$&$1$&$.0\cdot 10^{-4}$&$15$&$.7$&$4$&$.1\cdot 10^{-5}$&$14$&$.6$&$1$&$.6\cdot 10^{-6}$&$29$&$.3$&$5$&$.4\cdot 10^{-8}$&$60$&$.8$\\%
$32$&$$&$6$&$.6\cdot 10^{-3}$&$3$&$.9$&$6$&$.2\cdot 10^{-6}$&$16$&$.1$&$2$&$.6\cdot 10^{-6}$&$15$&$.6$&$5$&$.2\cdot 10^{-8}$&$31$&$.1$&$8$&$.5\cdot 10^{-10}$&$63$&$.2$\\%
$64$&$$&$1$&$.6\cdot 10^{-3}$&$4$&$.0$&$3$&$.9\cdot 10^{-7}$&$15$&$.9$&$1$&$.6\cdot 10^{-7}$&$15$&$.9$&$1$&$.7\cdot 10^{-9}$&$30$&$.6$&$1$&$.3\cdot 10^{-11}$&$64$&$.0$\\%
$128$&$$&$4$&$.1\cdot 10^{-4}$&$4$&$.0$&$2$&$.4\cdot 10^{-8}$&$16$&$.0$&$1$&$.0\cdot 10^{-8}$&$16$&$.0$&$5$&$.4\cdot 10^{-11}$&$31$&$.4$&$2$&$.1\cdot 10^{-13}$&$63$&$.1$\\%
$256$&$$&$1$&$.0\cdot 10^{-4}$&$4$&$.0$&$1$&$.5\cdot 10^{-9}$&$16$&$.0$&$6$&$.4\cdot 10^{-10}$&$16$&$.0$&$1$&$.7\cdot 10^{-12}$&$31$&$.7$&$6$&$.4\cdot 10^{-15}$&$32$&$.8$\\%
$512$&$$&$2$&$.6\cdot 10^{-5}$&$4$&$.0$&$9$&$.6\cdot 10^{-11}$&$16$&$.0$&$4$&$.0\cdot 10^{-11}$&$16$&$.0$&$5$&$.4\cdot 10^{-14}$&$31$&$.7$&$4$&$.7\cdot 10^{-15}$&--&\\%
$1\,024$&$$&$6$&$.4\cdot 10^{-6}$&$4$&$.0$&$6$&$.0\cdot 10^{-12}$&$16$&$.0$&$2$&$.5\cdot 10^{-12}$&$16$&$.0$&$2$&$.9\cdot 10^{-15}$&$18$&$.6$&$3$&$.6\cdot 10^{-15}$&--&\\\bottomrule %
\end {tabular}%
    \\[5mm]
\begin {tabular}{r<{\pgfplotstableresetcolortbloverhangright }@{}l<{\pgfplotstableresetcolortbloverhangleft }r<{\pgfplotstableresetcolortbloverhangright }@{}l<{\pgfplotstableresetcolortbloverhangleft }r<{\pgfplotstableresetcolortbloverhangright }@{}l<{\pgfplotstableresetcolortbloverhangleft }r<{\pgfplotstableresetcolortbloverhangright }@{}l<{\pgfplotstableresetcolortbloverhangleft }r<{\pgfplotstableresetcolortbloverhangright }@{}l<{\pgfplotstableresetcolortbloverhangleft }r<{\pgfplotstableresetcolortbloverhangright }@{}l<{\pgfplotstableresetcolortbloverhangleft }r<{\pgfplotstableresetcolortbloverhangright }@{}l<{\pgfplotstableresetcolortbloverhangleft }r<{\pgfplotstableresetcolortbloverhangright }@{}l<{\pgfplotstableresetcolortbloverhangleft }r<{\pgfplotstableresetcolortbloverhangright }@{}l<{\pgfplotstableresetcolortbloverhangleft }}%
\toprule \multicolumn {2}{c}{$K$}&\multicolumn {2}{c}{$n=6$}&\multicolumn {2}{c}{$R_{C}$}&\multicolumn {2}{c}{$n=7$}&\multicolumn {2}{c}{$R_{C}$}&\multicolumn {2}{c}{$n=8$}&\multicolumn {2}{c}{$R_{C}$}&\multicolumn {2}{c}{$n=9$}&\multicolumn {2}{c}{$R_{C}$}\\\midrule %
$2$&$$&$1$&$.6\cdot 10^{-3}$&--&&$1$&$.6\cdot 10^{-4}$&--&&$3$&$.2\cdot 10^{-5}$&--&&$2$&$.3\cdot 10^{-6}$&--&\\%
$4$&$$&$1$&$.1\cdot 10^{-5}$&$148$&$.2$&$1$&$.3\cdot 10^{-6}$&$129$&$.3$&$4$&$.8\cdot 10^{-8}$&$657$&$.8$&$4$&$.3\cdot 10^{-9}$&$521$&$.5$\\%
$8$&$$&$1$&$.1\cdot 10^{-7}$&$98$&$.0$&$5$&$.5\cdot 10^{-9}$&$229$&$.1$&$1$&$.3\cdot 10^{-10}$&$373$&$.8$&$5$&$.3\cdot 10^{-12}$&$817$&$.1$\\%
$16$&$$&$9$&$.6\cdot 10^{-10}$&$117$&$.1$&$2$&$.2\cdot 10^{-11}$&$254$&$.6$&$2$&$.8\cdot 10^{-13}$&$459$&$.0$&$5$&$.2\cdot 10^{-15}$&$1\,019$&$.3$\\%
$32$&$$&$7$&$.6\cdot 10^{-12}$&$125$&$.6$&$8$&$.8\cdot 10^{-14}$&$245$&$.2$&$4$&$.7\cdot 10^{-15}$&$60$&$.4$&$2$&$.0\cdot 10^{-15}$&--&\\%
$64$&$$&$6$&$.1\cdot 10^{-14}$&$125$&$.5$&$4$&$.7\cdot 10^{-15}$&$18$&$.8$&$3$&$.3\cdot 10^{-15}$&--&&$2$&$.0\cdot 10^{-15}$&--&\\%
$128$&$$&$2$&$.7\cdot 10^{-15}$&$22$&$.8$&$5$&$.3\cdot 10^{-15}$&--&&$4$&$.4\cdot 10^{-15}$&--&&$4$&$.4\cdot 10^{-15}$&--&\\%
$256$&$$&$2$&$.2\cdot 10^{-15}$&--&&$4$&$.2\cdot 10^{-15}$&--&&$4$&$.0\cdot 10^{-15}$&--&&$2$&$.2\cdot 10^{-15}$&--&\\%
$512$&$$&$2$&$.7\cdot 10^{-15}$&--&&$5$&$.3\cdot 10^{-15}$&--&&$4$&$.4\cdot 10^{-15}$&--&&$2$&$.7\cdot 10^{-15}$&--&\\%
$1\,024$&$$&$3$&$.1\cdot 10^{-15}$&--&&$4$&$.9\cdot 10^{-15}$&--&&$4$&$.4\cdot 10^{-15}$&--&&$2$&$.7\cdot 10^{-15}$&--&\\\bottomrule %
\end {tabular}%
}
\caption{\small{The 2D case. The errors in the mesh uniform norm and their ratios $R_C$ in dependence on $K=2,4,...,1024$ and $n=\overline{1,9}$ for algorithm (a)}
\label{tab:EX72:2D:FFT:ErrorCabs}}
\end{table}
\begin{figure}[htbp]\centering{
    \begin{minipage}[h]{0.49\linewidth}
        \center{\includegraphics[width=1\linewidth]{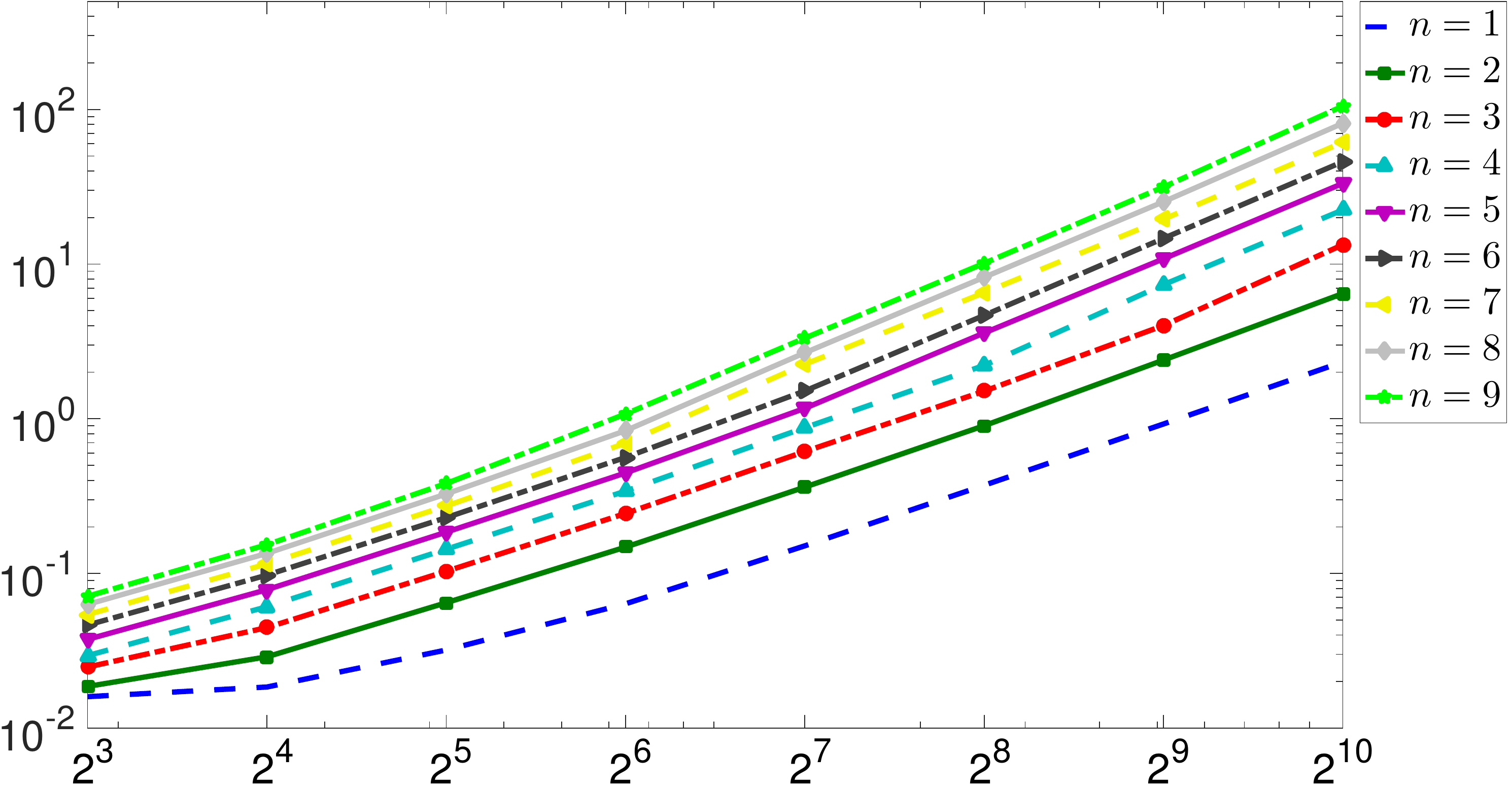}}\\ (a)
    \end{minipage}
    \begin{minipage}[h]{0.49\linewidth}
        \center{\includegraphics[width=1\linewidth]{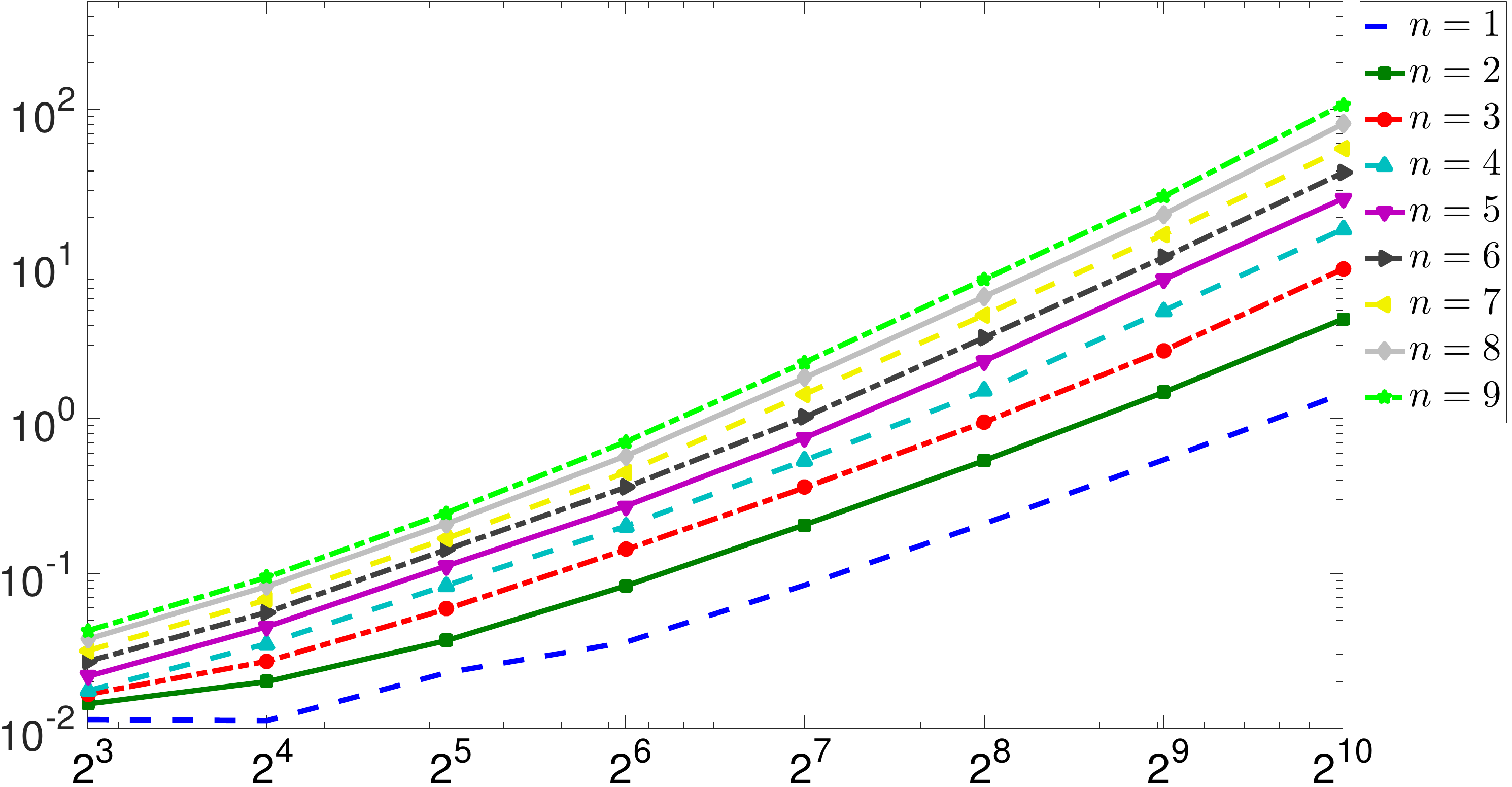}}\\ (b)
    \end{minipage}
    }\caption{\small{The 2D case. The execution time (in seconds) for algorithms (a) and (b) in dependence on $K=8,16,...,1024$ for $n=\overline{1,9}$}}
\label{fig:TIME:2D:Tsol}
\end{figure}
\begin{table}\centering{
\begin {tabular}{r<{\pgfplotstableresetcolortbloverhangright }@{}l<{\pgfplotstableresetcolortbloverhangleft }r<{\pgfplotstableresetcolortbloverhangright }@{}l<{\pgfplotstableresetcolortbloverhangleft }r<{\pgfplotstableresetcolortbloverhangright }@{}l<{\pgfplotstableresetcolortbloverhangleft }r<{\pgfplotstableresetcolortbloverhangright }@{}l<{\pgfplotstableresetcolortbloverhangleft }r<{\pgfplotstableresetcolortbloverhangright }@{}l<{\pgfplotstableresetcolortbloverhangleft }r<{\pgfplotstableresetcolortbloverhangright }@{}l<{\pgfplotstableresetcolortbloverhangleft }r<{\pgfplotstableresetcolortbloverhangright }@{}l<{\pgfplotstableresetcolortbloverhangleft }r<{\pgfplotstableresetcolortbloverhangright }@{}l<{\pgfplotstableresetcolortbloverhangleft }r<{\pgfplotstableresetcolortbloverhangright }@{}l<{\pgfplotstableresetcolortbloverhangleft }r<{\pgfplotstableresetcolortbloverhangright }@{}l<{\pgfplotstableresetcolortbloverhangleft }}%
\toprule \multicolumn {2}{c}{$K$}&\multicolumn {2}{c}{$n=1$}&\multicolumn {2}{c}{$n=2$}&\multicolumn {2}{c}{$n=3$}&\multicolumn {2}{c}{$n=4$}&\multicolumn {2}{c}{$n=5$}&\multicolumn {2}{c}{$n=6$}&\multicolumn {2}{c}{$n=7$}&\multicolumn {2}{c}{$n=8$}&\multicolumn {2}{c}{$n=9$}\\\midrule %
$8$&$$&--&&--&&--&&$1$&$.46$&$1$&$.6$&$1$&$.97$&$2$&$$&$2$&$.02$&$2$&$.01$\\%
$16$&$$&$1$&$.15$&$1$&$.55$&$1$&$.8$&$2$&$.07$&$2$&$.1$&$2$&$.1$&$2$&$.13$&$2$&$.14$&$2$&$.15$\\%
$32$&$$&$1$&$.74$&$2$&$.24$&$2$&$.31$&$2$&$.35$&$2$&$.35$&$2$&$.35$&$2$&$.37$&$2$&$.42$&$2$&$.49$\\%
$64$&$$&$1$&$.99$&$2$&$.29$&$2$&$.37$&$2$&$.38$&$2$&$.41$&$2$&$.45$&$2$&$.55$&$2$&$.57$&$2$&$.81$\\%
$128$&$$&$2$&$.37$&$2$&$.44$&$2$&$.51$&$2$&$.57$&$2$&$.62$&$2$&$.69$&$3$&$.24$&$3$&$.19$&$3$&$.1$\\%
$256$&$$&$2$&$.46$&$2$&$.49$&$2$&$.46$&$2$&$.53$&$3$&$.07$&$3$&$.08$&$2$&$.9$&$3$&$.08$&$3$&$.05$\\%
$512$&$$&$2$&$.49$&$2$&$.66$&$2$&$.66$&$3$&$.33$&$3$&$.02$&$3$&$.15$&$2$&$.99$&$3$&$.08$&$3$&$.11$\\%
$1\,024$&$$&$2$&$.52$&$2$&$.69$&$3$&$.33$&$3$&$.04$&$3$&$.07$&$3$&$.13$&$3$&$.14$&$3$&$.22$&$3$&$.34$\\\bottomrule %
\end {tabular}%
} \caption{\small{The 2D case.
The ratios of the execution times
for algorithm (a) in dependence on $K=2,4,...,1024$ and $n=\overline{1,9}$}
\label{tab:TIME:2D:FFT:Tsol}}
\end{table}
\begin{table}\centering{
\begin {tabular}{r<{\pgfplotstableresetcolortbloverhangright }@{}l<{\pgfplotstableresetcolortbloverhangleft }r<{\pgfplotstableresetcolortbloverhangright }@{}l<{\pgfplotstableresetcolortbloverhangleft }r<{\pgfplotstableresetcolortbloverhangright }@{}l<{\pgfplotstableresetcolortbloverhangleft }r<{\pgfplotstableresetcolortbloverhangright }@{}l<{\pgfplotstableresetcolortbloverhangleft }r<{\pgfplotstableresetcolortbloverhangright }@{}l<{\pgfplotstableresetcolortbloverhangleft }r<{\pgfplotstableresetcolortbloverhangright }@{}l<{\pgfplotstableresetcolortbloverhangleft }r<{\pgfplotstableresetcolortbloverhangright }@{}l<{\pgfplotstableresetcolortbloverhangleft }r<{\pgfplotstableresetcolortbloverhangright }@{}l<{\pgfplotstableresetcolortbloverhangleft }r<{\pgfplotstableresetcolortbloverhangright }@{}l<{\pgfplotstableresetcolortbloverhangleft }r<{\pgfplotstableresetcolortbloverhangright }@{}l<{\pgfplotstableresetcolortbloverhangleft }}%
\toprule \multicolumn {2}{c}{$K$}&\multicolumn {2}{c}{$n=1$}&\multicolumn {2}{c}{$n=2$}&\multicolumn {2}{c}{$n=3$}&\multicolumn {2}{c}{$n=4$}&\multicolumn {2}{c}{$n=5$}&\multicolumn {2}{c}{$n=6$}&\multicolumn {2}{c}{$n=7$}&\multicolumn {2}{c}{$n=8$}&\multicolumn {2}{c}{$n=9$}\\\midrule %
$8$&$$&--&&--&&--&&$1$&$.17$&$1$&$.28$&$1$&$.97$&$1$&$.99$&$1$&$.99$&$2$&$.01$\\%
$16$&&--&&$1$&$.39$&$1$&$.64$&$2$&$.02$&$2$&$.09$&$2$&$.08$&$2$&$.13$&$2$&$.2$&$2$&$.22$\\%
$32$&$$&$2$&$.04$&$1$&$.84$&$2$&$.17$&$2$&$.38$&$2$&$.47$&$2$&$.54$&$2$&$.49$&$2$&$.53$&$2$&$.61$\\%
$64$&$$&$1$&$.57$&$2$&$.26$&$2$&$.42$&$2$&$.42$&$2$&$.44$&$2$&$.52$&$2$&$.67$&$2$&$.74$&$2$&$.88$\\%
$128$&$$&$2$&$.34$&$2$&$.49$&$2$&$.52$&$2$&$.67$&$2$&$.77$&$2$&$.85$&$3$&$.21$&$3$&$.21$&$3$&$.27$\\%
$256$&$$&$2$&$.5$&$2$&$.6$&$2$&$.64$&$2$&$.8$&$3$&$.13$&$3$&$.27$&$3$&$.25$&$3$&$.34$&$3$&$.44$\\%
$512$&$$&$2$&$.58$&$2$&$.76$&$2$&$.91$&$3$&$.31$&$3$&$.37$&$3$&$.29$&$3$&$.31$&$3$&$.4$&$3$&$.43$\\%
$1\,024$&$$&$2$&$.68$&$2$&$.98$&$3$&$.36$&$3$&$.36$&$3$&$.33$&$3$&$.53$&$3$&$.58$&$3$&$.84$&$3$&$.94$\\\bottomrule %
\end {tabular}%
} \caption{\small{The 2D case.
The ratios of the execution times
for algorithm (b) in dependence on $K=2,4,...,1024$ and $n=\overline{1,9}$}
\label{tab:TIME:2D:CHOL:Tsol}}
\end{table}
\par Finally we consider the most interesting 3D case ($N=3$) and take the exact solution
$u(x):=\sin(2\pi x_1)\sin(3\pi x_2)$ $\sin(4\pi x_3)\cosh\big(\sqrt{2}x_1-x_2+x_3/\sqrt{3}\big)$.
Once again we compute the FEM solution for different values of $K=2,4,\ldots,64$ and $n=\overline{1,9}$ and study the error behaviour, see Fig. \ref{fig:EX72:3D:ErrorC} and Table \ref{tab:EX72:3D:FFT:ErrorCabs} (where values of $R_C$ less than 3 are omitted).
Conclusions are generally the same as in the 2D case.
Notice that now the worse stability properties of algorithm (b) are visible only for $n=8,9$ since much less maximal value of $K$ is taken.
\par The execution times in the 3D case are presented in Fig. \ref{fig:TIME:3D:Tsol} and Tables \ref{tab:TIME:3D:FFT:Tsol} and \ref{tab:TIME:3D:CHOL:Tsol}.
Once more conclusions are similar to the 2D case.
All the ratios in the both tables are notably less than the lower bound 8 for the theoretical ratios.
Importantly, for the maximal $K=64$ and $n=9$, system \eqref{eq:bvp glob 2} contains more than $190\cdot10^6$ unknowns but only less than 15 min is required to solve it that is the nice result (especially taking into account the Matlab implementation of loops).
\begin{figure}[htbp]\centering{
    \begin{minipage}[h]{0.49\linewidth}
        \center{\includegraphics[width=1\linewidth]{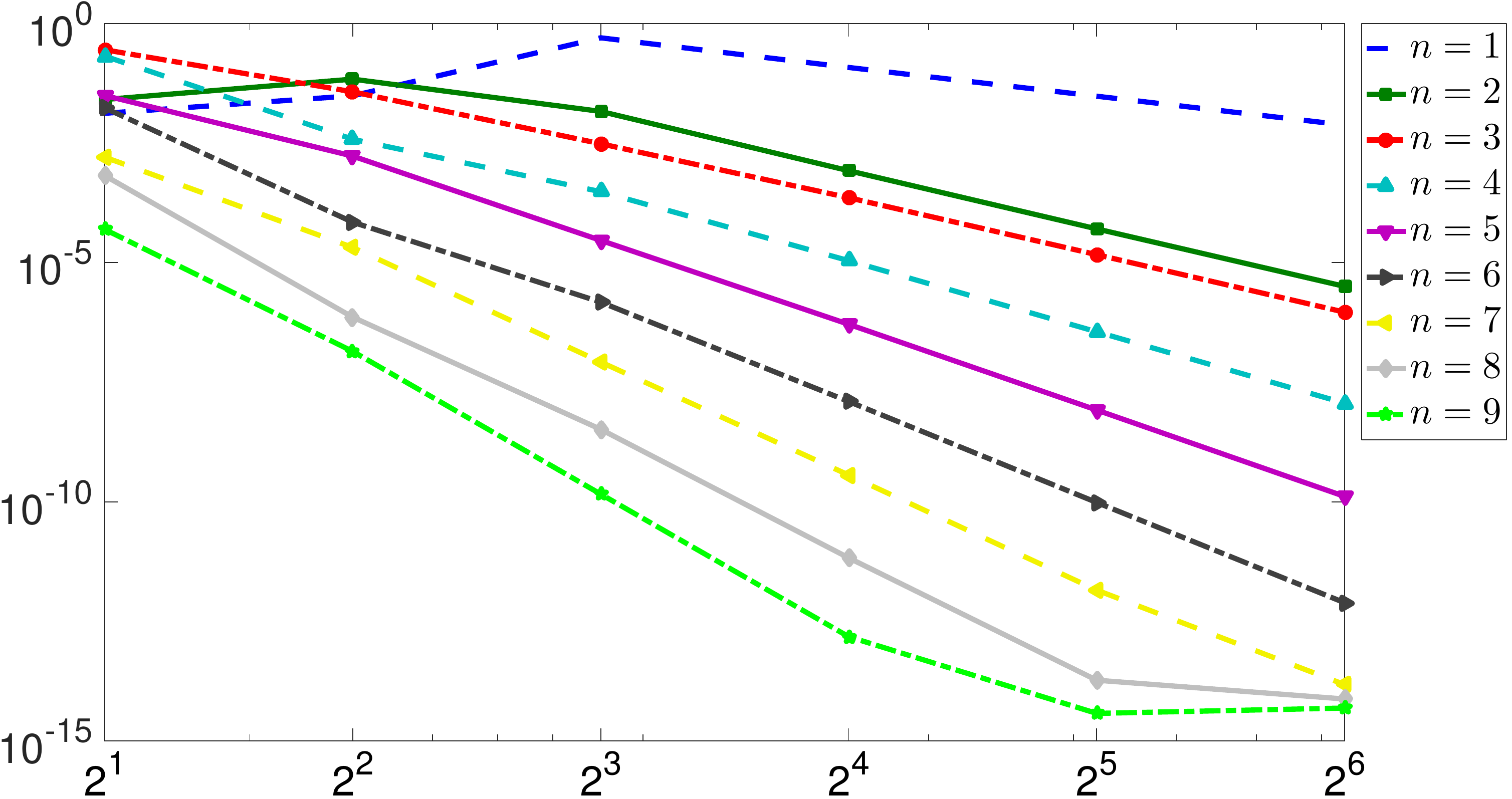}}\\ (a)
    \end{minipage}
    \begin{minipage}[h]{0.49\linewidth}
        \center{\includegraphics[width=1\linewidth]{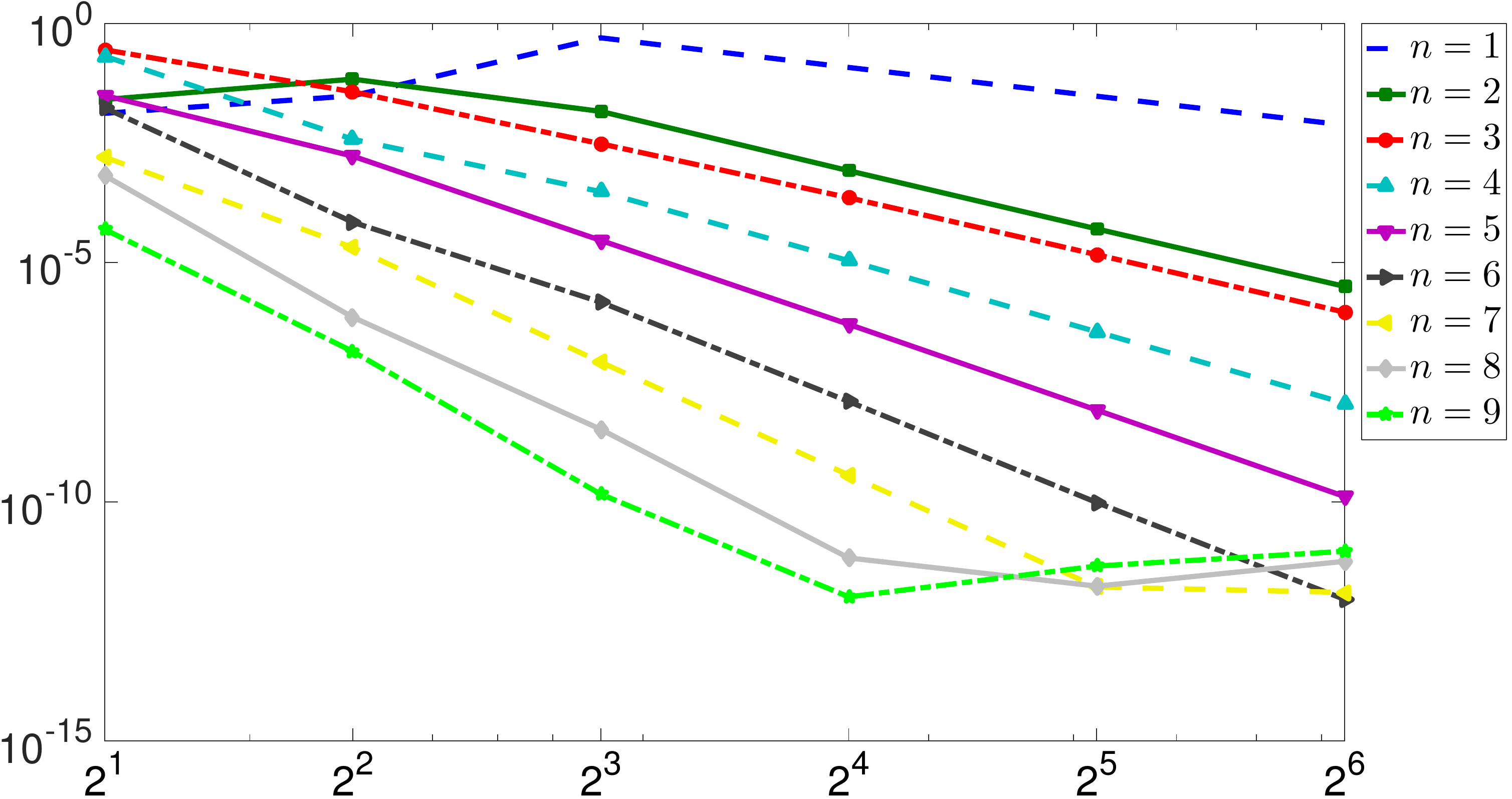}}\\ (b)
    \end{minipage}\\
    }\caption{\small{The 3D case. The errors in the mesh uniform norm for algorithms (a) and (b) in dependence on $K=2,4,...,64$ for $n=\overline{1,9}$}}
\label{fig:EX72:3D:ErrorC}
\end{figure}
\begin{table}\centering{
\begin {tabular}{r<{\pgfplotstableresetcolortbloverhangright }@{}l<{\pgfplotstableresetcolortbloverhangleft }r<{\pgfplotstableresetcolortbloverhangright }@{}l<{\pgfplotstableresetcolortbloverhangleft }r<{\pgfplotstableresetcolortbloverhangright }@{}l<{\pgfplotstableresetcolortbloverhangleft }r<{\pgfplotstableresetcolortbloverhangright }@{}l<{\pgfplotstableresetcolortbloverhangleft }r<{\pgfplotstableresetcolortbloverhangright }@{}l<{\pgfplotstableresetcolortbloverhangleft }r<{\pgfplotstableresetcolortbloverhangright }@{}l<{\pgfplotstableresetcolortbloverhangleft }r<{\pgfplotstableresetcolortbloverhangright }@{}l<{\pgfplotstableresetcolortbloverhangleft }r<{\pgfplotstableresetcolortbloverhangright }@{}l<{\pgfplotstableresetcolortbloverhangleft }r<{\pgfplotstableresetcolortbloverhangright }@{}l<{\pgfplotstableresetcolortbloverhangleft }r<{\pgfplotstableresetcolortbloverhangright }@{}l<{\pgfplotstableresetcolortbloverhangleft }r<{\pgfplotstableresetcolortbloverhangright }@{}l<{\pgfplotstableresetcolortbloverhangleft }}%
\toprule \multicolumn {2}{c}{$K$}&\multicolumn {2}{c}{$n=1$}&\multicolumn {2}{c}{$R_{C}$}&\multicolumn {2}{c}{$n=2$}&\multicolumn {2}{c}{$R_{C}$}&\multicolumn {2}{c}{$n=3$}&\multicolumn {2}{c}{$R_{C}$}&\multicolumn {2}{c}{$n=4$}&\multicolumn {2}{c}{$R_{C}$}&\multicolumn {2}{c}{$n=5$}&\multicolumn {2}{c}{$R_{C}$}\\\midrule %
$2$&$$&$1$&$.3\cdot 10^{-2}$&--&&$2$&$.6\cdot 10^{-2}$&--&&$2$&$.8\cdot 10^{-1}$&--&&$2$&$.1\cdot 10^{-1}$&--&&$3$&$.1\cdot 10^{-2}$&--&\\%
$4$&$$&$3$&$.1\cdot 10^{-2}$&--&&$6$&$.9\cdot 10^{-2}$&--&&$3$&$.7\cdot 10^{-2}$&$7$&$.6$&$3$&$.8\cdot 10^{-3}$&$54$&$.6$&$1$&$.7\cdot 10^{-3}$&$18$&$.5$\\%
$8$&$$&$5$&$.0\cdot 10^{-1}$&--&&$1$&$.5\cdot 10^{-2}$&$4$&$.7$&$3$&$.1\cdot 10^{-3}$&$12$&$.2$&$3$&$.0\cdot 10^{-4}$&$12$&$.5$&$2$&$.9\cdot 10^{-5}$&$57$&$.8$\\%
$16$&$$&$1$&$.2\cdot 10^{-1}$&$4$&$.2$&$8$&$.4\cdot 10^{-4}$&$17$&$.4$&$2$&$.3\cdot 10^{-4}$&$13$&$.4$&$1$&$.1\cdot 10^{-5}$&$27$&$.7$&$5$&$.1\cdot 10^{-7}$&$56$&$.8$\\%
$32$&$$&$3$&$.0\cdot 10^{-2}$&$4$&$.0$&$5$&$.1\cdot 10^{-5}$&$16$&$.4$&$1$&$.5\cdot 10^{-5}$&$15$&$.5$&$3$&$.6\cdot 10^{-7}$&$30$&$.6$&$8$&$.3\cdot 10^{-9}$&$62$&$.0$\\%
$64$&$$&$7$&$.5\cdot 10^{-3}$&$4$&$.0$&$3$&$.2\cdot 10^{-6}$&$16$&$.0$&$9$&$.2\cdot 10^{-7}$&$16$&$.0$&$1$&$.2\cdot 10^{-8}$&$31$&$.1$&$1$&$.3\cdot 10^{-10}$&$64$&$.0$\\\bottomrule %
\end {tabular}%
\\[5mm]
\begin {tabular}{r<{\pgfplotstableresetcolortbloverhangright }@{}l<{\pgfplotstableresetcolortbloverhangleft }r<{\pgfplotstableresetcolortbloverhangright }@{}l<{\pgfplotstableresetcolortbloverhangleft }r<{\pgfplotstableresetcolortbloverhangright }@{}l<{\pgfplotstableresetcolortbloverhangleft }r<{\pgfplotstableresetcolortbloverhangright }@{}l<{\pgfplotstableresetcolortbloverhangleft }r<{\pgfplotstableresetcolortbloverhangright }@{}l<{\pgfplotstableresetcolortbloverhangleft }r<{\pgfplotstableresetcolortbloverhangright }@{}l<{\pgfplotstableresetcolortbloverhangleft }r<{\pgfplotstableresetcolortbloverhangright }@{}l<{\pgfplotstableresetcolortbloverhangleft }r<{\pgfplotstableresetcolortbloverhangright }@{}l<{\pgfplotstableresetcolortbloverhangleft }r<{\pgfplotstableresetcolortbloverhangright }@{}l<{\pgfplotstableresetcolortbloverhangleft }}%
\toprule \multicolumn {2}{c}{$K$}&\multicolumn {2}{c}{$n=6$}&\multicolumn {2}{c}{$R_{C}$}&\multicolumn {2}{c}{$n=7$}&\multicolumn {2}{c}{$R_{C}$}&\multicolumn {2}{c}{$n=8$}&\multicolumn {2}{c}{$R_{C}$}&\multicolumn {2}{c}{$n=9$}&\multicolumn {2}{c}{$R_{C}$}\\\midrule %
$2$&$$&$1$&$.7\cdot 10^{-2}$&--&&$1$&$.6\cdot 10^{-3}$&--&&$6$&$.6\cdot 10^{-4}$&--&&$5$&$.0\cdot 10^{-5}$&--&\\%
$4$&$$&$7$&$.0\cdot 10^{-5}$&$245$&$.2$&$2$&$.1\cdot 10^{-5}$&$77$&$.3$&$7$&$.2\cdot 10^{-7}$&$909$&$.9$&$1$&$.4\cdot 10^{-7}$&$355$&$.4$\\%
$8$&$$&$1$&$.5\cdot 10^{-6}$&$46$&$.9$&$8$&$.4\cdot 10^{-8}$&$248$&$.1$&$3$&$.3\cdot 10^{-9}$&$221$&$.1$&$1$&$.4\cdot 10^{-10}$&$961$&$.4$\\%
$16$&$$&$1$&$.3\cdot 10^{-8}$&$119$&$.5$&$3$&$.6\cdot 10^{-10}$&$230$&$.7$&$6$&$.7\cdot 10^{-12}$&$486$&$.8$&$1$&$.5\cdot 10^{-13}$&$957$&$.3$\\%
$32$&$$&$9$&$.7\cdot 10^{-11}$&$128$&$.9$&$1$&$.4\cdot 10^{-12}$&$254$&$.2$&$1$&$.9\cdot 10^{-14}$&$360$&$.8$&$3$&$.8\cdot 10^{-15}$&$40$&$.1$\\%
$64$&$$&$7$&$.8\cdot 10^{-13}$&$124$&$.9$&$1$&$.5\cdot 10^{-14}$&$94$&$.6$&$7$&$.5\cdot 10^{-15}$&--&&$4$&$.9\cdot 10^{-15}$&--&\\\bottomrule %
\end {tabular}%
} \caption{\small{The 3D case. The errors in the mesh uniform norm and their ratios $R_C$ in dependence on $K=2,4,...,64$ and $n=\overline{1,9}$ for algorithm (a)}
\label{tab:EX72:3D:FFT:ErrorCabs}}
\end{table}
\begin{figure}[htbp]\centering{
    \begin{minipage}[h]{0.49\linewidth}
        \center{\includegraphics[width=1\linewidth]{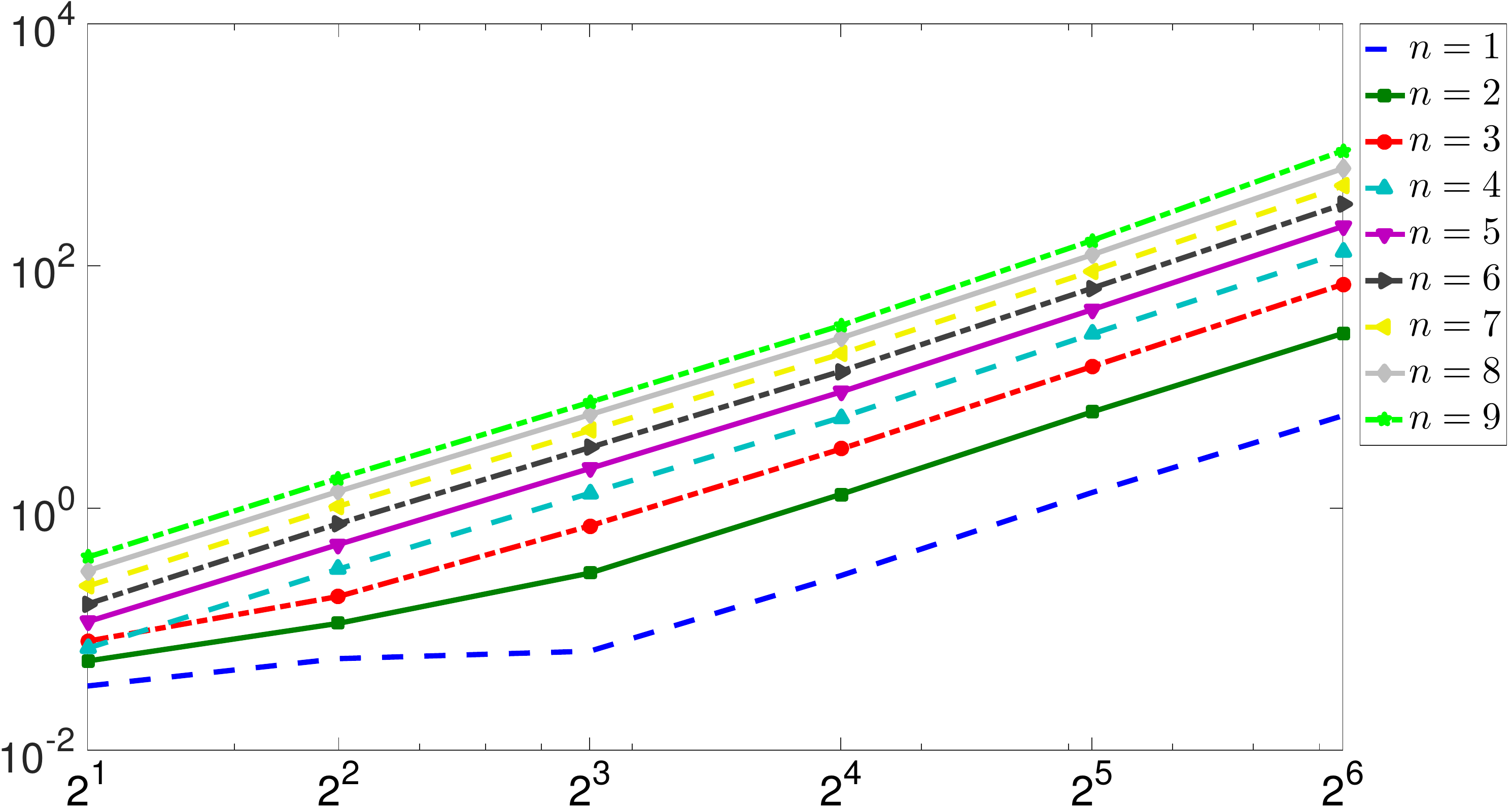}}\\ (a)
    \end{minipage}
    \begin{minipage}[h]{0.49\linewidth}
        \center{\includegraphics[width=1\linewidth]{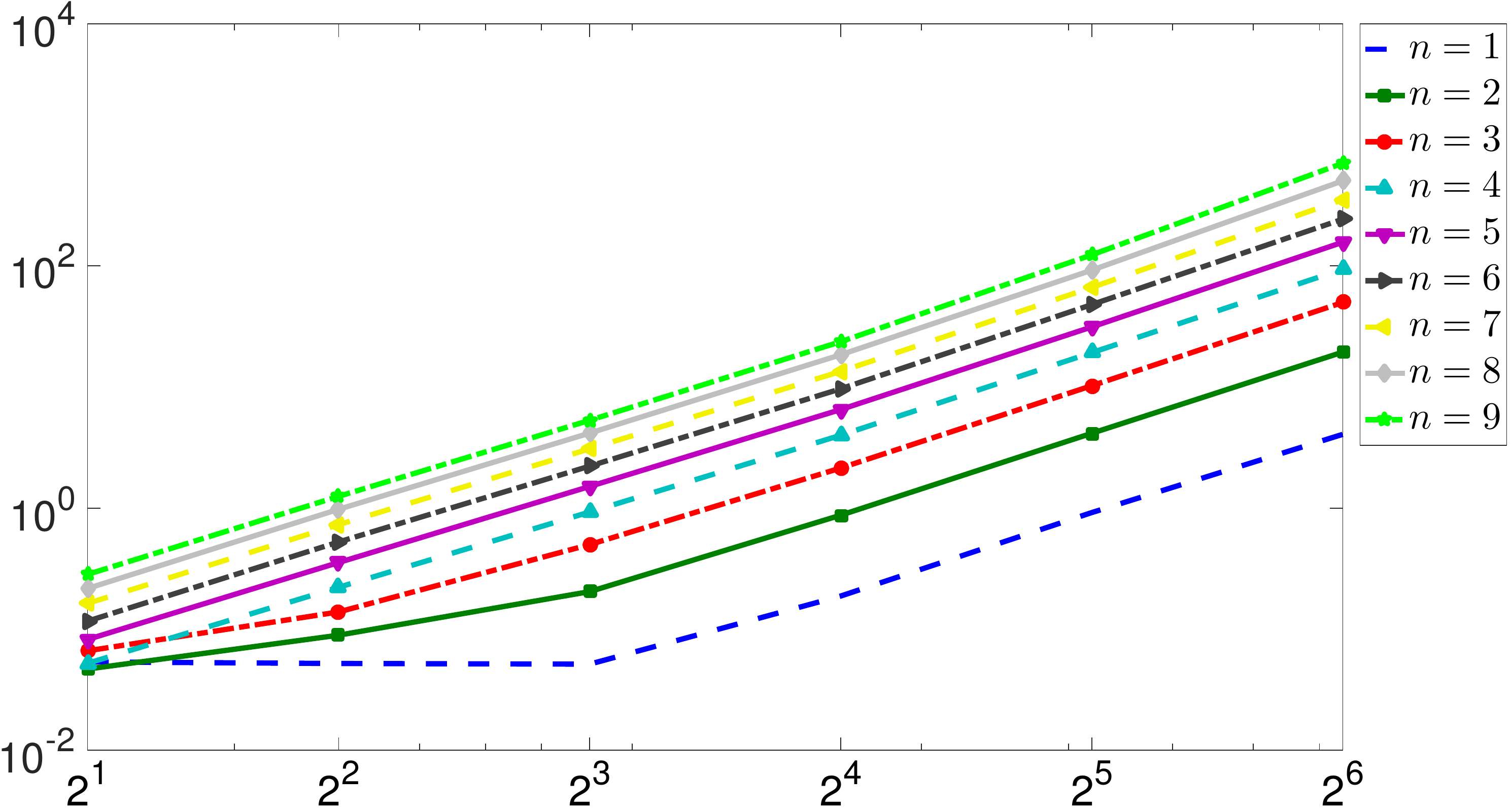}}\\ (b)
    \end{minipage}
    }\caption{\small{The 3D case. The execution time (in seconds) for algorithms (a) and (b) in dependence on $K=2,4,...,64$ for $n=\overline{1,9}$}}
\label{fig:TIME:3D:Tsol}
\end{figure}
\begin{table}\centering{
\begin {tabular}{r<{\pgfplotstableresetcolortbloverhangright }@{}l<{\pgfplotstableresetcolortbloverhangleft }r<{\pgfplotstableresetcolortbloverhangright }@{}l<{\pgfplotstableresetcolortbloverhangleft }r<{\pgfplotstableresetcolortbloverhangright }@{}l<{\pgfplotstableresetcolortbloverhangleft }r<{\pgfplotstableresetcolortbloverhangright }@{}l<{\pgfplotstableresetcolortbloverhangleft }r<{\pgfplotstableresetcolortbloverhangright }@{}l<{\pgfplotstableresetcolortbloverhangleft }r<{\pgfplotstableresetcolortbloverhangright }@{}l<{\pgfplotstableresetcolortbloverhangleft }r<{\pgfplotstableresetcolortbloverhangright }@{}l<{\pgfplotstableresetcolortbloverhangleft }r<{\pgfplotstableresetcolortbloverhangright }@{}l<{\pgfplotstableresetcolortbloverhangleft }r<{\pgfplotstableresetcolortbloverhangright }@{}l<{\pgfplotstableresetcolortbloverhangleft }r<{\pgfplotstableresetcolortbloverhangright }@{}l<{\pgfplotstableresetcolortbloverhangleft }}%
\toprule \multicolumn {2}{c}{$K$}&\multicolumn {2}{c}{$n=1$}&\multicolumn {2}{c}{$n=2$}&\multicolumn {2}{c}{$n=3$}&\multicolumn {2}{c}{$n=4$}&\multicolumn {2}{c}{$n=5$}&\multicolumn {2}{c}{$n=6$}&\multicolumn {2}{c}{$n=7$}&\multicolumn {2}{c}{$n=8$}&\multicolumn {2}{c}{$n=9$}\\\midrule %
$4$&$$&$1$&$.68$&$2$&$.05$&$2$&$.34$&$4$&$.51$&$4$&$.35$&$4$&$.62$&$4$&$.58$&$4$&$.52$&$4$&$.49$\\%
$8$&$$&$1$&$.15$&$2$&$.6$&$3$&$.82$&$4$&$.23$&$4$&$.23$&$4$&$.25$&$4$&$.22$&$4$&$.29$&$4$&$.24$\\%
$16$&$$&$4$&$.21$&$4$&$.47$&$4$&$.31$&$4$&$.22$&$4$&$.29$&$4$&$.27$&$4$&$.29$&$4$&$.3$&$4$&$.31$\\%
$32$&$$&$4$&$.85$&$4$&$.79$&$4$&$.8$&$4$&$.89$&$4$&$.78$&$4$&$.84$&$4$&$.83$&$4$&$.86$&$5$&$.03$\\%
$64$&$$&$4$&$.32$&$4$&$.46$&$4$&$.78$&$4$&$.77$&$4$&$.88$&$4$&$.93$&$5$&$.05$&$5$&$.16$&$5$&$.52$\\\bottomrule %
\end {tabular}%
} \caption{\small{The 3D case.
The ratios of the execution times
for algorithm (a) in dependence on $K=2,4,...,64$ and $n=\overline{1,9}$}
\label{tab:TIME:3D:FFT:Tsol}}
\end{table}
\begin{table}\centering{
\begin {tabular}{r<{\pgfplotstableresetcolortbloverhangright }@{}l<{\pgfplotstableresetcolortbloverhangleft }r<{\pgfplotstableresetcolortbloverhangright }@{}l<{\pgfplotstableresetcolortbloverhangleft }r<{\pgfplotstableresetcolortbloverhangright }@{}l<{\pgfplotstableresetcolortbloverhangleft }r<{\pgfplotstableresetcolortbloverhangright }@{}l<{\pgfplotstableresetcolortbloverhangleft }r<{\pgfplotstableresetcolortbloverhangright }@{}l<{\pgfplotstableresetcolortbloverhangleft }r<{\pgfplotstableresetcolortbloverhangright }@{}l<{\pgfplotstableresetcolortbloverhangleft }r<{\pgfplotstableresetcolortbloverhangright }@{}l<{\pgfplotstableresetcolortbloverhangleft }r<{\pgfplotstableresetcolortbloverhangright }@{}l<{\pgfplotstableresetcolortbloverhangleft }r<{\pgfplotstableresetcolortbloverhangright }@{}l<{\pgfplotstableresetcolortbloverhangleft }r<{\pgfplotstableresetcolortbloverhangright }@{}l<{\pgfplotstableresetcolortbloverhangleft }}%
\toprule \multicolumn {2}{c}{$K$}&\multicolumn {2}{c}{$n=1$}&\multicolumn {2}{c}{$n=2$}&\multicolumn {2}{c}{$n=3$}&\multicolumn {2}{c}{$n=4$}&\multicolumn {2}{c}{$n=5$}&\multicolumn {2}{c}{$n=6$}&\multicolumn {2}{c}{$n=7$}&\multicolumn {2}{c}{$n=8$}&\multicolumn {2}{c}{$n=9$}\\\midrule %
$4$&$$&--&&$1$&$.9$&$2$&$.09$&$4$&$.24$&$4$&$.32$&$4$&$.51$&$4$&$.47$&$4$&$.51$&$4$&$.41$\\%
$8$&$$&--&&$2$&$.29$&$3$&$.6$&$4$&$.19$&$4$&$.25$&$4$&$.23$&$4$&$.23$&$4$&$.26$&$4$&$.25$\\%
$16$&$$&$3$&$.65$&$4$&$.28$&$4$&$.29$&$4$&$.3$&$4$&$.32$&$4$&$.35$&$4$&$.36$&$4$&$.44$&$4$&$.46$\\%
$32$&$$&$4$&$.89$&$4$&$.75$&$4$&$.78$&$4$&$.81$&$4$&$.81$&$4$&$.96$&$4$&$.94$&$5$&$$&$5$&$.22$\\%
$64$&$$&$4$&$.43$&$4$&$.68$&$4$&$.87$&$4$&$.89$&$5$&$.01$&$5$&$.08$&$5$&$.31$&$5$&$.47$&$5$&$.71$\\\bottomrule %
\end {tabular}%
} \caption{\small{The 3D case.
The ratios of the execution times
for algorithm (b) in dependence on $K=2,4,...,64$ and $n=\overline{1,9}$}
\label{tab:TIME:3D:CHOL:Tsol}}
\end{table}

\medskip\par The study has been funded within the framework of the Academic Fund Program at the National Research University Higher School of Economics in 2016-2017 (grant no. 16-01-0054) and by the Russian Academic Excellence Project `5-100'
 as well as by the RFBR, grant №~16-01-00048.

\end{document}